\documentclass{article}

\usepackage{arxiv}

\usepackage[utf8]{inputenc} 
\usepackage[T1]{fontenc}    
\usepackage{hyperref}       
\usepackage{url}            
\usepackage{booktabs}       
\usepackage{amsfonts}       
\usepackage{nicefrac}       
\usepackage{microtype}      
\usepackage{lipsum}		
\usepackage{graphicx}
\usepackage{natbib}
\usepackage{doi}
\usepackage{amsmath,amssymb,amsthm}
\usepackage{geometry}
\usepackage{hyperref}
\usepackage{enumitem}
\usepackage{subcaption}
\usepackage{color}
\usepackage{float}

\newtheorem{theorem}{Theorem}[section]
\newtheorem{lemma}[theorem]{Lemma}

\newcommand{\EE}{\mathbb{E}}

\title{A diffusion time-changed stochastic SIS epidemic model: well-posedness, long-time behavior, and numerical approximation}

\author{Xiaotong Li\\
Department of Mathematics\\
Jiangsu Second Normal University\\
Nanjing, 210013, China \\
\texttt{x.t.li@foxmail.com} \\
\And
Huaqian Zhou\\
Department of Mathematics\\
Shanghai Normal University\\
Shanghai, 200234, China \\
\texttt{348112933@qq.com} \\
\And
Ruchun Zuo\thanks{Corresponding author} \\
Department of Mathematics\\
Shanghai Normal University\\
Shanghai, 200234, China \\
\texttt{ruchunzuo@outlook.com} \\
}

\renewcommand{\shorttitle}{A Diffusion Time-Changed Stochastic SIS Epidemic Model}

\hypersetup{
pdftitle={A template for the arxiv style},
pdfsubject={q-bio.NC, q-bio.QM},
pdfauthor={David S.~Hippocampus, Elias D.~Striatum},
pdfkeywords={First keyword, Second keyword, More},
}

\begin{document}
\maketitle

\begin{abstract}
In this paper, we propose and analyze a diffusion time-changed susceptible-infected-susceptible (SIS) epidemic model driven by time-changed Brownian motion. We prove that the proposed model admits a unique global positive solution for any initial value in $(0,N)$. The extinction and persistence of the disease are then investigated. To approximate the diffusion time-changed SIS model, we construct a positivity-preserving logarithmic Euler-Maruyama (LEM) method. Assuming that the time-changed is given by the inverse of a standard $\alpha$-stable subordinator with $\alpha\in(0,1)$, we prove that the numerical solution converges strongly to the exact solution with order $\alpha$. Finally, numerical experiments are provided to confirm the predicted convergence rates and illustrate the positivity-preserving property of the proposed method.
\end{abstract}

\keywords{Stochastic SIS model \and Time-change \and Inverse subordinato \and Logarithmic Euler-Maruyama method \and Strong convergence}

\section{Introduction}\label{sec:intro}
Mathematical epidemic models play an important role in understanding the transmission mechanisms of infectious diseases, evaluating control strategies, and predicting long-term dynamics. The classical susceptible-infected-susceptible (SIS) model is one of the most fundamental epidemic models and is suitable for diseases in which recovered individuals do not acquire permanent immunity. In the deterministic framework, the dynamical behavior of the SIS model has been well understood \cite{Hethcote2014}. 

In real epidemic systems, disease transmission is inevitably affected by environmental fluctuations, random contact patterns, and uncertainties in model parameters. Therefore, stochastic epidemic models may provide a more appropriate description of disease dynamics in many situations. A common approach is to introduce stochastic perturbations into model parameters. For example, Gray et al. \cite{Gray2011} proposed a stochastic SIS model by perturbing the disease transmission coefficient and obtained the following stochastic differential equation (SDE)
\begin{equation}\label{eq:model31}
	dI(t) = I(t)(\beta N - \mu - \gamma - \beta I(t))dt + \sigma I(t)(N - I(t))dB(t),
\end{equation}
where $\beta$ represents the disease transmission coefficient, $\mu$ and $\gamma$ denote the natural death rate and recovery rate, respectively, $\sigma$ is the environmental noise intensity, and $B(t)$ is a standard Brownian motion.

For stochastic SIS epidemic models, many theoretical results have been obtained. Gray et al. \cite{Gray2011} proposed a stochastic SIS epidemic model and studied the existence of a unique global positive solution, extinction, persistence and stationary distribution. Later, threshold dynamics and long-time behaviors were further investigated for stochastic SIS models with vaccination \cite{ZhaoJiang2014}, standard incidence \cite{LinJiang2014}, stochastic perturbations \cite{LahrouzSettatiAkharif2017}, nonlinear incidence rates \cite{LiuJiangHayatAlsaedi2019,TengWang2016}, and multiple Brownian motions \cite{CaiCaiMao2019,LiGuo2021}. Xu \cite{Xu2017} also established global threshold dynamics for the stochastic SIS model.

In addition to theoretical investigations, the numerical approximation of stochastic SIS epidemic models has also attracted considerable attention. 
Yang and Huang \cite{YangHuang2021} proposed an Euler-Maruyama (EM) method combined with the Lamperti transformation and established its strong convergence of order one in the \(p\)-th moment sense over finite time intervals. Chen, Gan and Wang \cite{Chen2021} proposed an explicit numerical scheme and proved its first-order strong convergence. Several other effective numerical schemes have also been developed for stochastic SIS epidemic models. Yang et al. \cite{YangPanLiuMu2022} studied split-step \(\theta\) methods with truncated Wiener process. Yang et al. \cite{YangLiYangZhang2023} analyzed a linearly backward EM method with truncated Wiener process. Liu, Wang and Dai \cite{LiuWangDai2024} proposed a Milstein type method based on a  logarithmic transformation. Kiouvrekis and Stamatiou \cite{KiouvrekisStamatiou2025} proposed a semi-discrete method for the stochastic SIS epidemic model. More recently, Yang and Huang \cite{YangHuang2024} developed a logarithmic truncated Euler-Maruyama (LTEM) method by incorporating a truncation technique, and showed that the numerical solution can preserve positivity and reproduce the extinction behavior of the exact solution over an infinite time interval.

Although the above studies have greatly advanced the theory and numerical analysis of stochastic SIS epidemic models, most of these works are concerned with Brownian perturbations evolving in the physical time \(t\). In many practical situations, disease transmission may involve random waiting times, temporal heterogeneity and memory effects. Time-changed stochastic processes provide a useful framework for describing such random temporal effects. In particular, when \(E_t\) is the inverse of a subordinator, the time-changed Brownian motion \(B(E_t)\) can be interpreted as Brownian fluctuations evolving under a random operational time. This motivates the study of SIS epidemic models driven by time-changed Brownian motion. Time-changed SDEs have been widely studied in recent years. Kobayashi \cite{Kobayashi2011} developed a stochastic calculus for time-changed semimartingales and derived a corresponding time-changed It\^{o} formula, which provides an important theoretical foundation for time-changed SDEs. Magdziarz \cite{Magdziarz2009} studied stochastic representations of subdiffusion processes with time-dependent drift. Jum and Kobayashi \cite{Jum2016} proposed a strong and weak approximation scheme for SDEs driven by time-changed Brownian motion. Jin and Kobayashi \cite{Jin2019} investigated strong approximation for time-changed SDEs with time-space-dependent coefficients. More related results on time-changed SDEs can be found in \cite{DengLiu2020,Jin2021,LiLiuLiaoXing2023,LiuWangZuo2025,Long2024,WuLiXuPeng2024,WuZuo2026,Zuo2026}.

However, there seem to be few results on diffusion time-changed SIS models driven by time-changed Brownian motion. In particular, the existence of a global positive solution, extinction and persistence, as well as positivity-preserving numerical approximations, have not been fully investigated.

In this paper, we introduce a diffusion time-changed SIS model driven by time-changed Brownian motion. More precisely, we consider
\begin{equation}\label{eq:model33}
	\mathrm{d}I(t) = I(t)\bigl[ \beta N - \mu - \gamma - \beta I(t) \bigr] \mathrm{d}t + \sigma I(t)(N - I(t)) \mathrm{d}B(E_t),
\end{equation}
where \(I(0) = I_0 \in (0,N)\), \(E_t\) is the inverse of a subordinator and is assumed to be independent of the standard Brownian motion \(B(t)\). 

An important feature of model \eqref{eq:model33} is that the drift term evolves in the physical time t, whereas the stochastic perturbation is driven by the time-changed Brownian motion $B(E_t)$. This asymmetric time structure distinguishes model \eqref{eq:model33} from fully time-changed SDEs and prevents the standard duality principle from being directly applied. It therefore brings additional difficulties to both the theoretical analysis and the construction of numerical approximations.

The main contributions of this work are as follows. First, we prove that the SIS model \eqref{eq:model33} admits a unique global positive solution for any initial value in \((0,N)\), which ensures the biological feasibility of the model. Second, we study the long-time dynamical behavior of the solution and establish criteria for extinction and persistence of the disease. Third, we construct a logarithmic Euler-Maruyama (LEM) scheme for the time-changed SIS model and establish its strong convergence of order $\alpha$ when $E_t$ is the inverse of a standard $\alpha$-stable subordinator with $\alpha\in(0,1)$.

The rest of the paper is organized as follows. In Section \ref{sec:preliminary}, the mathematical preparations are presented and some useful lemmas are revisited. The main theoretical results are stated and proved in Section \ref{sec:results}. Section \ref{sec:nummethod} focuses on the construction and analysis of a positivity-preserving numerical method, and establishes its strong convergence. The numerical examples are provided in Section \ref{sec:numerical_simulation}. Conclusions and discussions are presented in Section \ref{sec:conclusion}.

\section{Mathematical preliminaries}\label{sec:preliminary}
In this section, we give some basic settings, notation, and auxiliary results for the rest of the paper.

Throughout this paper, we work on a complete probability space $(\Omega, \mathcal{F},\{\mathcal{F}_t\}_{t\geq 0}, \mathbb{P})$ satisfying the usual conditions, and we let $B(t)$ be a scalar Brownian motion defined on this space. Let $D_t$ be a one-dimensional strictly increasing L\'evy process in $(\Omega,\mathcal{F},\{\mathcal{F}_t\}_{t\geq 0}, \mathbb{P})$ with Laplace transform
\begin{equation*}
	\EE [e^{-rD_t}] = e^{-t \psi(r)},\quad r>0,\,t\geq 0,
\end{equation*}
where the Laplace exponent $\psi:(0,\infty)\rightarrow(0,\infty)$ is a Bernstein function with $\psi(0+):=\lim_{r\downarrow0}\psi(r)=0$. Set $D_0 = 0$, almost surely. Define the inverse subordinator $E_t$ by the inverse of $D_t$ in the way that
\begin{equation*}
	E_t := \inf\{ u\geq0\,;\,D_u > t \}, \quad t \geq 0.
\end{equation*}
It is clear that $E_t$ is continuous and non-decreasing almost surely. Then, $B(E_t)$ is called the time-changed Brownian motion, which is regarded as a sub-diffusion process. If $D$ is a standard $\alpha$-stable subordinator with $\alpha\in(0,1)$, then $\psi(r)=r^{\alpha}$ and $E$ is called the inverse $\alpha$-stable subordinated process. 

In the rest of this paper, we assume that $B(t)$ and $D_t$ are independent. Let $\mathbb{E}$ denote the expectation corresponding to $\mathbb{P}$. Moreover, \(\mathbb{E}_{B}\) and \(\mathbb{E}_{D}\) denote the expectations with respect to \(B(t)\) and \(D_t\), respectively. For every integrable random variable $X$,  independence of \(B\) and \(D\) yields $\EE[X]=\EE_B[\EE_D[X]]=\EE_D[\EE_B[X]]$. Throughout the subsequent analysis, $\EE_B$ denotes expectation with respect to the Brownian motion for a fixed realization of $D$. In addition, for any $a,b\in\mathbb{R}$, we use the notation $a\vee b:=\max\{a,b\}$ and $a\wedge b:=\min\{a,b\}$. 


	

The following two lemmas are taken from \cite{Jin2019,Nane2016} and will play an important role in the subsequent analysis.
\begin{lemma}\label{lemma:211}
	For an $\alpha$-stable inverse subordinator $E_t$, we have
	\[
	\lim_{t\to\infty} \frac{E_t}{t} = 0 \quad \text{a.s.}
	\]
\end{lemma}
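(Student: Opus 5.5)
The plan is to move the question from $E_t$ to the subordinator $D_t$, using the inverse relation $D_{E_t-}\le t\le D_{E_t}$ together with the strong law of large numbers for the increasing Lévy process $D$.

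\textbf{Step 1: the growth of $D$.} Since $D$ is a subordinator with i.i.d. increments, the increments $D_{n}-D_{n-1}$, $n\ge1$, are i.i.d. and nonnegative. For the standard $\alpha$-stable case, $\EE[D_1]=\lim_{r\downarrow 0}\psi(r)/r=\lim_{r\downarrow0} r^{\alpha-1}=+\infty$. The strong law of large numbers for nonnegative i.i.d. variables with infinite mean then gives $D_n/n\to\infty$ almost surely. Monotonicity of $D$ extends this to continuous time: for $u\in[n,n+1)$ we have $D_u/u\ge D_n/(n+1)$. Hence $D_u/u\to\infty$ a.s. as $u\to\infty$.

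\textbf{Step 2: $E_t\to\infty$.} Since $D_u<\infty$ for every $u$, the set $\{u: D_u>t\}$ has infimum tending to infinity as $t\to\infty$. Indeed, for each fixed $M$ we have $E_t\ge M$ as soon as $t\ge D_M$. So $E_t\to\infty$ a.s.

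\textbf{Step 3: the inverse relation.} By definition of $E_t$ and right-continuity of $D$, we have $D_{E_t}\ge t$. I would also note that $D_u\le t$ for $u<E_t$, so $D_{E_t-}\le t$; this side is not actually needed. Using only the first inequality,
\[
0\le \frac{E_t}{t}\le \frac{E_t}{D_{E_t}} \quad\text{for } t>0 \text{ with } E_t>0 .
\]
Combining Step 2 with Step 1 evaluated along $u=E_t\to\infty$ gives $E_t/D_{E_t}\to0$ a.s., which proves the lemma. The almost-sure event is the intersection of the event in Step 1 with that in Step 2, so no uncountable-union issue arises.

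\textbf{Main obstacle.} The only delicate step is Step 1, namely the infinite-mean SLLN. If I want to avoid quoting it, I would argue directly from self-similarity, $D_n\overset{d}{=}n^{1/\alpha}D_1$. Take $K>0$ and Borel–Cantelli along the geometric sequence $n=2^k$:
\[
\mathbb P(D_{2^k}\le K2^k)=\mathbb P\bigl(D_1\le K2^{k(1-1/\alpha)}\bigr).
\]
The Laplace-transform small-ball estimate $\mathbb P(D_1\le x)\le e\,\EE[e^{-D_1/x}]=e\cdot e^{-x^{-\alpha}}$ makes this summable. Hence $D_{2^k}>K2^k$ eventually. Monotonicity fills in between dyadic times at the cost of a factor $2$, and since $K$ is arbitrary, $D_u/u\to\infty$ a.s.
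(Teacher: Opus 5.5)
\textbf{Gap in Step 3: the inequality is reversed.} Steps 1 and 2 are correct, and so is the Borel--Cantelli alternative for Step 1. Step 3, however, uses the wrong side of the inverse relation. From $D_{E_t}\ge t$ you get
\[
\frac{E_t}{D_{E_t}}\le\frac{E_t}{t}.
\]
This is a \emph{lower} bound on $E_t/t$ by a quantity tending to $0$, so it proves nothing. The inequality you display, $E_t/t\le E_t/D_{E_t}$, would require $D_{E_t}\le t$, which is false in general. The standard $\alpha$-stable subordinator is driftless and pure-jump, so it crosses the level $t$ by a jump. For each fixed $t>0$ this happens almost surely, because by scaling and the zero Lebesgue measure of its range it does not hit points. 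In that case $D_{E_t}>t$ strictly. The side you set aside as ``not actually needed'', namely $D_u\le t$ for every $u<E_t$, is exactly the one that gives an upper bound on $E_t/t$.

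\textbf{The fix.} Suppose $E_t>0$. Then $E_t/2<E_t$, so $0<D_{E_t/2}\le t$, and
\[
0\le\frac{E_t}{t}\le\frac{E_t}{D_{E_t/2}}=2\,\frac{E_t/2}{D_{E_t/2}}\longrightarrow0\quad\text{a.s.}
\]
The limit follows from Step 1 applied along $u=E_t/2$, which tends to infinity by Step 2. Alternatively, use $D_{E_t-}\le t$ together with $D_{u-}/u\to\infty$, which follows from Step 1 by monotonicity. With this correction your argument is a complete proof.

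\textbf{Comparison with the paper.} The paper gives no proof of this lemma and simply quotes it from \cite{Jin2019,Nane2016}. Your route is self-contained and elementary. It uses only $\EE[D_1]=\psi'(0+)=\infty$, so it covers every inverse subordinator whose underlying subordinator has infinite mean.
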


\begin{lemma}\label{lemma:29} 
	Let \( E \) be the inverse of a subordinator \( D \) whose Laplace exponent \( \psi \) is regularly varying at \( \infty \) with index \( \rho \in [0, 1) \). If \( \rho = 0 \), assume further that \( \nu(0, \infty) = \infty \). Fix \( \lambda > 0 \), \( t > 0 \) and \( r > 0 \).
	\begin{enumerate}[label=(\arabic*)]
		\item If $r < 1/(1 - \rho)$, then $ \EE[e^{\lambda E_t^r}] < \infty $.
		\item If $ r > 1/(1 - \rho)$, then $ \EE[e^{\lambda E_t^r}] = \infty $.
	\end{enumerate}
\end{lemma}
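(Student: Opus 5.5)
The plan is to reduce everything to the tail of $E_t$. The key identity is $\{E_t > x\} = \{D_x < t\}$, which holds up to null sets because $D$ is strictly increasing. It gives
\[
\EE\bigl[e^{\lambda E_t^r}\bigr] = 1 + \int_0^\infty \lambda r x^{r-1} e^{\lambda x^r}\, \mathbb{P}(D_x < t)\,dx .
\]
So it suffices to show that $-\log \mathbb{P}(D_x<t)$ grows like $x^{1/(1-\rho)}$ times a slowly varying factor as $x\to\infty$, with $t$ fixed. Both directions will come from exponential tilting of the Laplace transform $\EE[e^{-sD_x}] = e^{-x\psi(s)}$.

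\textbf{Upper bound (part (1)).} By Chebyshev's inequality, for every $s>0$,
\[
\mathbb{P}(D_x<t) \le e^{st}\,\EE[e^{-sD_x}] = \exp\bigl(st - x\psi(s)\bigr).
\]
Write $\psi(s)=s^\rho L(s)$ with $L$ slowly varying. Then $\psi(s)/s$ is regularly varying of index $\rho-1<0$, so I can choose $s=s(x)\to\infty$ with $\psi(s)/s = 2t/x$. This makes the exponent equal to $-x\psi(s(x))/2$.

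Inverting regular variation gives $s(x) = x^{1/(1-\rho)}\ell_1(x)$ and $x\psi(s(x)) = x^{1/(1-\rho)}\ell_2(x)$, with $\ell_1,\ell_2$ slowly varying. By Potter's bounds, $\ell_2(x)\ge x^{-\varepsilon}$ for large $x$. Taking $\varepsilon$ small enough that $r < 1/(1-\rho)-\varepsilon$, the integrand is dominated by $\exp(\lambda x^r - c x^{1/(1-\rho)-\varepsilon})$, which is integrable.

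In the case $\rho=0$, the assumption $\nu(0,\infty)=\infty$ is exactly what gives $\psi(s)\to\infty$. Without it $\psi$ is bounded and $E_t$ has only exponential tails. With it, $x\psi(s(x))$ is superlinear in $x$, which is what is needed when $r<1$.

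\textbf{Lower bound (part (2)).} Tilt the law of $D_x$ by $e^{-sD_x}/\EE[e^{-sD_x}]$. Under the tilted measure $\mathbb{P}_s$, $D_x$ is still a subordinator, with mean $x\psi'(s)$. Choose $s$ so that $x\psi'(s)=t/2$. Markov's inequality under $\mathbb{P}_s$ gives $\mathbb{P}_s(D_x<t)\ge 1/2$. Therefore
\[
\mathbb{P}(D_x<t) \ge \EE\bigl[e^{-sD_x}\mathbf 1_{\{D_x<t\}}\bigr]\, e^{0} \ge e^{-x\psi(s)}\,\EE\bigl[\mathbf 1_{\{D_x<t\}}\bigr]_{\mathbb{P}_s}\!\!\cdot 1 \ge \tfrac12\, e^{-st - x\psi(s)}.
\]
Here I use that $e^{-sD_x}\ge e^{-st}$ on $\{D_x<t\}$, which combines to give the bound $\mathbb{P}(D_x<t)\ge \tfrac12 e^{-st-x\psi(s)}$.

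Since $\psi'$ is completely monotone, hence monotone, the monotone density theorem shows that $\psi'$ is regularly varying of index $\rho-1$. So $s(x)$ and $st+x\psi(s)$ are again of order $x^{1/(1-\rho)}$ up to slowly varying factors. Using Potter bounds the other way, the integrand is at least $\exp(\lambda x^r - Cx^{1/(1-\rho)+\varepsilon})$. For $r>1/(1-\rho)$ this diverges.

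\textbf{Main obstacle.} I expect the delicate step to be the regular-variation bookkeeping: inverting $\psi(s)/s$ and $\psi'(s)$ asymptotically and controlling the slowly varying factors, especially when $\rho=0$. There, $1/(1-\rho)=1$ and the conclusion hinges on $\psi(s)\to\infty$, hence on the infinite Lévy measure. I would handle this first with explicit Potter-type estimates and only then assemble the integrals.
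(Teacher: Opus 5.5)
The paper gives no proof of this lemma; it is imported from Jin--Kobayashi \cite{Jin2019}. Your Chernoff/Esscher-tilting argument therefore has to stand on its own, and most of it does. Part (1) is correct for every $\rho\in[0,1)$. Let $\kappa$ denote the inverse of the increasing function $s\mapsto s/\psi(s)$, which is regularly varying of index $1-\rho$. Then your choice is $s(x)=\kappa(x/(2t))$, the bound reads $\mathbb{P}(E_t>x)\le\exp\{-t\,\kappa(x/(2t))\}$, and Potter's bounds finish it.

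The gap is in part (2), in exactly the case $\rho=0$ that you flagged. The monotone density theorem gives $\psi'(s)\sim\rho\,\psi(s)/s$. For $\rho=0$ this only says $\psi'(s)=o(\psi(s)/s)$, and $\psi'$ need not be regularly varying at all. For example, take $\nu=\sum_{n\ge1}\delta_{2^{-n^2}}$: then $\psi$ is slowly varying and $\nu(0,\infty)=\infty$. But $\int_0^\epsilon y\,\nu(dy)$ jumps by a factor $\approx 2^{2N+1}$ at $\epsilon=2^{-N^2}$, so by Karamata's Tauberian theorem $\psi'$ is not regularly varying. Hence your claim that the root of $x\psi'(s)=t/2$ and the quantity $st+x\psi(s)$ are $x^{1/(1-\rho)}$ times slowly varying factors is unsupported there.

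You only need an upper bound, though, and concavity gives $\psi'(s)\le\psi(s)/s$. The cleanest repair is to tilt at $s=\kappa(2x/t)$, i.e.\ $x\psi(s)/s=t/2$. Then $\EE_s[D_x]=x\psi'(s)\le t/2$, Markov gives $\mathbb{P}_s(D_x<t)\ge\tfrac12$, and since $e^{-sD_x}\le1$,
\[
\mathbb{P}(E_t>x)\ge\mathbb{P}(D_x<t)\ge\EE\bigl[e^{-sD_x}\mathbf 1_{\{D_x<t\}}\bigr]=e^{-x\psi(s)}\,\mathbb{P}_s(D_x<t)\ge\tfrac12\exp\bigl\{-\tfrac t2\,\kappa(2x/t)\bigr\}.
\]
This mirrors the part (1) bound and is uniform in $\rho\in[0,1)$. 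It also dispenses with both the monotone density theorem and the superfluous $e^{-st}$ factor in your chain.

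Two minor points. First, the inclusions $\{D_x<t\}\subset\{E_t>x\}\subset\{D_x\le t\}$ are all you actually use; a.s.\ equality would need $\mathbb{P}(D_x=t)=0$, not strict monotonicity of $D$. Second, $\nu(0,\infty)=\infty$ plays no role in your argument. With finite $\nu$ and zero drift, $E_t$ has an exactly exponential tail and both parts still hold, so superlinearity of $x\psi(s(x))$ is not what is needed when $r<1$.
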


\section{Model properties}\label{sec:results}
In this section, we investigate the fundamental properties and long-term dynamics of the diffusion time-changed SIS model \eqref{eq:model33}. We first establish the existence and uniqueness of a global solution and show that the solution remains in the biologically meaningful interval $(0,N)$ almost surely. We then derive an extinction criterion in the subcritical case and further characterize the long-term behavior of the solution in the supercritical case.

\begin{theorem}
	For any initial value \(I(0) = I_0 \in (0,N)\), the diffusion time-changed SIS model admits a unique global positive solution \(I(t) \in (0,N)\) for all \(t \ge 0\) almost surely, i.e.,
	\[
	\mathbb{P}\{I(t) \in (0,N), \ \forall t \ge 0\} = 1.
	\]
\end{theorem}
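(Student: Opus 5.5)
The plan is the standard Khasminskii-type stopping-time argument, adapted to the time-changed It\^{o} formula of Kobayashi \cite{Kobayashi2011}. The drift $I(\beta N-\mu-\gamma-\beta I)$ and the diffusion coefficient $\sigma I(N-I)$ are polynomials, hence locally Lipschitz on $(0,N)$. The time-changed Brownian motion $B(E_t)$ is a continuous semimartingale with respect to the time-changed filtration $\{\mathcal{F}_{E_t}\}$, with quadratic variation $E_t$.

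First I would obtain a unique maximal local solution $I(t)$ on $[0,\tau_e)$ from the standard theory of SDEs driven by continuous semimartingales with locally Lipschitz coefficients. Here $\tau_e$ is the explosion time, understood as the first exit time from $(0,N)$. For $k\ge k_0$, with $k_0$ chosen so that $I_0\in(1/k,N-1/k)$, I define
\[
\tau_k=\inf\{t\in[0,\tau_e): I(t)\notin(1/k,N-1/k)\}.
\]
The $\tau_k$ increase to some $\tau_\infty\le\tau_e$. It then suffices to show $\tau_\infty=\infty$ a.s.

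Next I would use the Lyapunov function $V(x)=-\log x-\log(N-x)+C$ on $(0,N)$, which tends to $\infty$ at both endpoints. Applying the time-changed It\^{o} formula to $V(I(t\wedge\tau_k))$ gives a $dt$-part, a $dE_t$-part and a local martingale part. The $dt$-part is
\[
-\Bigl(\tfrac1x-\tfrac1{N-x}\Bigr)x(\beta N-\mu-\gamma-\beta x)
=-(\beta N-\mu-\gamma-\beta x)+\tfrac{x}{N-x}(\beta N-\mu-\gamma-\beta x).
\]
The second term is troublesome as $x\to N$, because it contains $\frac{x}{N-x}(-\mu-\gamma)$ plus bounded pieces. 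Its sign is favorable: the term is $\le K$ for a constant $K$, since $\beta N-\beta x=\beta(N-x)$ cancels the singularity and $-(\mu+\gamma)x/(N-x)\le 0$. Hence the $dt$-part is bounded above by a constant $K_1$. The $dE_t$-part is $\frac12\sigma^2x^2(N-x)^2\bigl(\frac1{x^2}+\frac1{(N-x)^2}\bigr)=\frac12\sigma^2\bigl((N-x)^2+x^2\bigr)\le\sigma^2N^2$. Therefore
\[
\EE_B V(I(t\wedge\tau_k))\le V(I_0)+K_1 T+\sigma^2N^2E_T\quad\text{for }t\le T,
\]
conditionally on a fixed path of $D$. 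The stochastic integral stopped at $\tau_k$ has a bounded integrand $\sigma(N-2x)$ and bounded quadratic variation $\le E_T$, so its $\EE_B$-expectation is zero. This is where the independence of $B$ and $D$ and the conditioning device $\EE=\EE_D\EE_B$ introduced in Section~\ref{sec:preliminary} are used.

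The main obstacle is precisely the justification of that martingale step under the asymmetric time structure. One must check that $\int_0^{t\wedge\tau_k}\sigma(N-2I)\,dB(E_s)$ is a true martingale given $D$. I would argue that, conditionally on $D$, $E$ is a deterministic continuous nondecreasing path, and $B(E_\cdot)$ is then a continuous Gaussian martingale with bracket $E_\cdot$. I would then take $\EE_D$ at the end, using $\EE[E_T]<\infty$, which holds by Lemma~\ref{lemma:29} and in fact even conditionally on $D$ pathwise. Given this, on $\{\tau_k\le T\}$ we have $V(I(\tau_k))\ge -\log(1/k)-\log(N)+C\to\infty$. This yields
\[
\mathbb{P}(\tau_k\le T)\le \frac{V(I_0)+K_1T+\sigma^2N^2\EE E_T}{\log k-\log N+C}\to0.
\]
Hence $\mathbb{P}(\tau_\infty\le T)=0$ for every $T$, so $\tau_\infty=\infty$ a.s. It follows that $\tau_e=\infty$ and $I(t)\in(0,N)$ for all $t$ a.s.; uniqueness is inherited from the local Lipschitz property.
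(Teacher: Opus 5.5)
Your argument is correct, but it uses a different Lyapunov function from the paper, and that changes what you need to know about the clock $E$.

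The paper takes $V(x)=\frac1x+\frac1{N-x}$. For that choice the generator only satisfies multiplicative bounds $L_1V\le C_1V$ and $L_2V\le C_2V$. The paper therefore needs the time-changed Gronwall inequality, arrives at $\mathbb{E}_B V(I(t\wedge\tau_k))\le V(I_0)e^{C_1T+C_2E_T}$, and then calls on Lemma~\ref{lemma:29} to guarantee $\mathbb{E}_D[e^{C_2E_T}]<\infty$.

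Your logarithmic barrier $-\log x-\log(N-x)+C$ has generator bounded above by constants: at most $\beta N+\mu+\gamma$ in the $dt$ part and at most $\frac12\sigma^2N^2$ in the $dE_t$ part. You therefore get an additive bound with no Gronwall step. You only need $\mathbb{E}[E_T]<\infty$, and not even that if, as you hint, you bound $\mathbb{P}_B(\tau_k\le T)$ for each fixed path of $D$ and then use dominated convergence. Your route is more elementary and works for any inverse subordinator, without the regular-variation hypothesis behind Lemma~\ref{lemma:29}.

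The paper's reciprocal function buys something too. It yields bounds on $\mathbb{E}_B[I^{-1}]$ and $\mathbb{E}_B[(N-I)^{-1}]$ of the form $Ce^{C(T+E_T)}$. The same mechanism, with $x^{-p}$, is reused in Lemma~\ref{lem:exp-moment-bounds} for the convergence analysis.

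Two small fixes:
\begin{itemize}
\item Fix $C\ge\log(N^2/4)$, for example $C=2\log N$, so that $V\ge0$ on $(0,N)$. Your Chebyshev-type step $\mathbb{E}[V(I(\tau_k\wedge T))]\ge(\log k-\log N+C)\,\mathbb{P}(\tau_k\le T)$ requires this.
\item The martingale integrand is $V'(x)\sigma x(N-x)=\sigma(2x-N)$, not $\sigma(N-2x)$. This is harmless, since only its boundedness is used.
\end{itemize}
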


\begin{proof}
	Viewing the diffusion time-changed SIS model \eqref{eq:model33} as an SDE on \(\mathbb{R}\), its coefficients are locally Lipschitz continuous. Hence, for any given initial value \(I_0 \in (0,N)\), there exists a unique maximal local solution \(I(t)\) on \(t \in [0, \tau_e)\), where \(\tau_e\) is the explosion time (see e.g., \cite{Protter2004}). To show that the solution is global, we need to prove \(\tau_e = \infty\) almost surely.
	
	Choose integer \(k_0 \geq 1\) sufficiently large such that \(1/k_0 < I_0 < N - 1/k_0\). For each integer \(k \ge k_0\), define the stopping time
	\[
	\tau_k = \inf\{t \in [0, \tau_e) : I(t) \notin (1/k, N-1/k)\}.
	\]
	Clearly, \(\tau_k\) is increasing as \(k \to \infty\). Set \(\tau_\infty = \lim_{k\to\infty} \tau_k\), then \(\tau_\infty \le \tau_e\) almost surely. If we can show \(\tau_\infty = \infty\) almost surely, then \(\tau_e = \infty\) almost surely and \(I(t) \in (0,N)\) for all \(t \ge 0\).
	
	Suppose, to the contrary, that \(\tau_\infty < \infty\) with positive probability. Then there exist constants \(T > 0\) and \(\varepsilon \in (0,1)\) such that
	\[
	\mathbb{P}\{\tau_\infty \le T\} > \varepsilon.
	\]
	Consequently, there exists an integer \(k_1 \ge k_0\) such that
	\begin{equation}
		\mathbb{P}\{\tau_k \le T\} \ge \varepsilon \quad \forall~ k \ge k_1. \label{eq:model3.4}
	\end{equation}
	Define a Lyapunov function \(V: (0,N) \to \mathbb{R}_+\) by
	\[
	V(x) = \frac{1}{x} + \frac{1}{N-x}.
	\]
	
	Applying the time-changed It\^o formula to \(V(I(t\wedge \tau_k))\), and then taking expectations, where the stochastic integral term vanishes due to the independence of \(B\) and \(E\), we obtain, for any \(t\in[0,T]\) and \(k\ge k_1\),
	\begin{equation}\label{eq:E_BV}
		\mathbb{E}_B\left[V(I(t\wedge \tau_k))\right] = V(I_0) + \mathbb{E}_B\left[\int_{0}^{t\wedge\tau_k} L_1 V(I(s)) \mathrm{d}t\right] + \mathbb{E}_B\left[\int_{0}^{t\wedge\tau_k} L_2 V(I(s)) \mathrm{d}E_t\right],
	\end{equation}
	where $L_1 V(x) = V'(x) \left[x(\beta N - \mu - \gamma - \beta x)\right]$ and $L_2 V(x) = \tfrac12 V''(x) \sigma^2 x^2 (N-x)^2$.
	
	It is easy to show that
	\begin{align*}
		L_1 V(x) &=\left(-\frac{1}{x^2}+\frac{1}{\left(N-x\right)^2}\right)x\left(\beta(N-x)-(\mu+\gamma)\right)\\
		&=\frac{\mu+\gamma}{x}-\frac{\beta(N-x)}{x}+\frac{\beta x}{N-x}-\frac{(\mu+\gamma)x}{(N-x)^2}\\
		&\leq \frac{\mu+\gamma}{x}+\frac{\beta N}{N-x}\\
		&= C_1 V(x)
	\end{align*}
	and
	\begin{align*}
		L_2V(x)&=\frac{1}{2}\left(\frac{2}{x^3}+\frac{2}{(N-x)^3}\sigma^2x^2(N-x)^2\right)\\
		&\leq \frac{\sigma^2N^2}{x}+\frac{\sigma^2N^2}{N-x}\\
		&=C_2 V(x),
	\end{align*}
	where $C_1=(\mu+\gamma)\vee\beta N,~ C_2=\sigma^2N^2$.
	
	Substituting this into \eqref{eq:E_BV}, we have
	\[
	\mathbb{E}_B \left[V(I(t \wedge \tau_k))\right] \le V(I_0) + C_1 \int_0^t \mathbb{E}_B[V(I(s \wedge \tau_k))] \mathrm{d}s + C_2 \int_0^t \mathbb{E}_B[V(I(s \wedge \tau_k))] \mathrm{d}E_s.
	\]
	Applying the time-changed Gronwall inequality yields
	\begin{equation*}
		\mathbb{E}_B \left[V(I(t \wedge \tau_k))\right] \le V(I_0) e^{C_1 T + C_2 E_T}.
	\end{equation*}
	Taking expectations with respect to the time-change process \(D\) on both sides, we further obtain
	\begin{align}\label{eq:EV}
		\mathbb{E}\left[V(I(t\wedge\tau_k))\right]
		&=\mathbb{E}_{D}\left[\mathbb{E}_{B}V(I(t\wedge\tau_k))\right]   \nonumber\\
		&\leq V(I_0)e^{C_1T}\mathbb{E}_{D}\left[e^{C_2E_T}\right].
	\end{align}
	
	Now, let \(\Omega_k = \{\tau_k \le T\}\) for $k\geq k_1$. It follows from \eqref{eq:model3.4} that \(\mathbb{P}(\Omega_k) \ge \varepsilon\). For every \(\omega \in \Omega_k\), \(I(\tau_k, \omega)\) equals either \(1/k\) or \(N-1/k\), so
	\[
	V(I(\tau_k, \omega)) \ge k.
	\]
	From \eqref{eq:EV}, we obtain
	\[
	\mathbb{E}[V(I(\tau_k \wedge T))] \ge \mathbb{E}[I_{\Omega_k} V(I(\tau_k))] \ge k \mathbb{P}(\Omega_k) \ge \varepsilon k.
	\]
	Combining these and Lemma \ref{lemma:29}, we have
	\[
	\varepsilon k \le V(I_0)e^{C_1T}\mathbb{E}_{D}\left[e^{C_2E_T}\right] < \infty \quad \forall~ k \ge k_1,
	\]
	which leads to a contradiction as \(k \to \infty\). Hence \(\tau_\infty = \infty\) almost surely, completing the proof.
\end{proof}
With the global existence established, we now investigate conditions for disease extinction.
\begin{theorem}
	Let \(\mathcal{R}_0 = \beta N / (\mu + \gamma)\). If \(\mathcal{R}_0  < 1\), then for any initial value \(I(0) = I_0 \in (0,N)\), the solution of diffusion time-changed SIS model \eqref{eq:model33} satisfies
	\[
	\limsup_{t\to\infty} \frac{1}{t} \log I(t) < 0 \quad \text{a.s.}
	\]
	In other words, the disease dies out exponentially with probability one.
\end{theorem}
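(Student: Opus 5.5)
The plan is to apply the time-changed It\^o formula to $\log I(t)$. This is legitimate because the previous theorem gives $I(t)\in(0,N)$ for all $t\ge0$ almost surely. With $f(x)=\log x$ we have $f'(x)=1/x$ and $f''(x)=-1/x^2$, so
\begin{equation*}
\log I(t) = \log I_0 + \int_0^t \bigl(\beta N-\mu-\gamma-\beta I(s)\bigr)\,\mathrm{d}s - \frac{\sigma^2}{2}\int_0^t (N-I(s))^2\,\mathrm{d}E_s + M(t),
\end{equation*}
where $M(t)=\sigma\int_0^t (N-I(s))\,\mathrm{d}B(E_s)$.

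The drift is then bounded directly, without optimizing over $I$. Since $I(s)>0$ we have $\beta N-\mu-\gamma-\beta I(s)\le \beta N-\mu-\gamma=(\mu+\gamma)(\mathcal{R}_0-1)<0$. The $\mathrm{d}E_s$ term is nonpositive because $E$ is non-decreasing, so it can simply be dropped. This yields
\[
\frac1t\log I(t)\le \frac{\log I_0}{t}+(\mu+\gamma)(\mathcal{R}_0-1)+\frac{M(t)}{t}.
\]
It would be tempting to handle the $\mathrm{d}s$ and $\mathrm{d}E_s$ terms jointly via a quadratic in $I$, as in Gray et al. Here this is unnecessary and would in fact be harmful, because the two terms run on different clocks. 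Discarding the $\mathrm{d}E_s$ term avoids that problem entirely.

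The main step is to show $M(t)/t\to0$ almost surely. The process $M$ is a continuous local martingale, with respect to the time-changed filtration, whose quadratic variation is
\[
\langle M\rangle_t=\sigma^2\int_0^t (N-I(s))^2\,\mathrm{d}E_s\le \sigma^2N^2E_t.
\]
I would invoke the strong law of large numbers for local martingales, or equivalently the Dambis--Dubins--Schwarz representation $M(t)=W(\langle M\rangle_t)$ for some Brownian motion $W$. On the event $\{\langle M\rangle_\infty<\infty\}$, $M(t)$ converges to a finite limit, so $M(t)/t\to0$. On the complementary event, the law of the iterated logarithm for $W$ gives $|M(t)|\le C\sqrt{\langle M\rangle_t\log\log\langle M\rangle_t}$ for large $t$. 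Together with $\langle M\rangle_t\le\sigma^2N^2E_t$ and Lemma~\ref{lemma:211}, which gives $E_t/t\to0$ (and in particular $E_t=o(t)$), this bound implies $M(t)/t\to0$ almost surely. This is the step I expect to be the main obstacle. The difficulty is to justify that $M$ is genuinely a continuous local martingale with the stated bracket; this requires $E$ to be a continuous adapted time change and $I$ to be in synchronization with $E$, as in Kobayashi's framework. Once that is granted, the argument is routine, and note that Lemma~\ref{lemma:211} is stated only for $\alpha$-stable $E$.

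Taking $\limsup$ as $t\to\infty$ then gives
\[
\limsup_{t\to\infty}\frac1t\log I(t)\le-(\mu+\gamma)(1-\mathcal{R}_0)<0\quad\text{a.s.},
\]
which is the claimed exponential extinction.
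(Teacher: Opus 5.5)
Your proof is correct and follows the paper's argument: It\^o's formula for $\log I(t)$, the bound $\beta N-\mu-\gamma-\beta I(s)\le\beta N-\mu-\gamma$, and the strong law for the continuous local martingale $M$ via $\langle M\rangle_t\le\sigma^2N^2E_t$ together with Lemma~\ref{lemma:211}. The one difference is that you drop the nonpositive $\mathrm{d}E_s$ term outright, whereas the paper splits it as $-\frac12\sigma^2N^2E_t+\sigma^2N\int_0^tI(s)\,\mathrm{d}E_s$ and needs $E_t/t\to0$ to kill the second piece; your version is cleaner and uses the growth of $E_t$ only in the martingale step, where $\limsup_{t\to\infty}E_t/t<\infty$ already suffices.
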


\begin{proof}
	Applying the time-changed It\^o formula to \(\log I(t)\), we obtain
	\begin{equation}
		\begin{aligned}
			\log I(t) &= \log I_0 + \int_0^t (\beta N - \mu - \gamma - \beta I(s)) \mathrm{d}s \\
			&\quad - \frac12 \sigma^2 \int_0^t (N - I(s))^2 \mathrm{d}E_s + \int_0^t \sigma (N - I(s)) \mathrm{d}B(E_s).
		\end{aligned}
		\label{eq:model3.5}
	\end{equation}
	Since \(I(s) \in (0,N)\), we have the estimates
	\[
	\int_0^t (\beta N - \mu - \gamma - \beta I(s)) \mathrm{d}s \le (\beta N - \mu - \gamma) t
	\]
	and
	\[
	-\frac12 \sigma^2 \int_0^t (N - I(s))^2 \mathrm{d}E_s \le -\frac12 \sigma^2 N^2 E_t + \sigma^2 N \int_0^t I(s) \mathrm{d}E_s.
	\]
	Substituting these into (\ref{eq:model3.5}) gives
	\begin{equation}
		\log I(t) \le \log I_0 + (\beta N - \mu - \gamma) t - \frac12 \sigma^2 N^2 E_t + \sigma^2 N \int_0^t I(s) \mathrm{d}E_s + M_t, \label{eq:model3.6}
	\end{equation}
	where \(M_t = \int_0^t \sigma (N - I(s)) \mathrm{d}B(E_s)\).
	
	Dividing both sides of (\ref{eq:model3.6}) by \(t\) and taking the limit superior as \(t \to \infty\), we obtain
	\begin{align*}
		\limsup_{t\to\infty} \frac{1}{t} \log I(t) &\le \beta N - \mu - \gamma - \frac12 \sigma^2 N^2 \limsup_{t\to\infty} \frac{E_t}{t} \\
		&\quad + \sigma^2 N \limsup_{t\to\infty} \frac{1}{t} \int_0^t I(s) dE_s + \limsup_{t\to\infty} \frac{M_t}{t}\quad \text{a.s.}
	\end{align*}
	By Lemma \ref{lemma:211}, we have
	\[
	\lim_{t\to\infty}\frac{E_t}{t}=0
	\quad \text{a.s.}
	\]
	Moreover, since \(I(s)\in(0, N)\), it follows that
	\[
	\limsup_{t\to\infty} \frac{1}{t} \int_0^t I(s) dE_s \le N \limsup_{t\to\infty} \frac{E_t}{t} = 0 \quad \text{a.s.}
	\]
	
	
	
	By the strong law of large numbers for continuous local martingales, we obtain
	\[
	\frac{M_t}{t}\to0
	\quad \text{a.s.}
	\]
	Consequently,
	\[
	\limsup_{t\to\infty}\frac1t\log I(t)
	\leq \beta N-\mu-\gamma \quad \text{a.s.}
	\]
	Since \(\mathcal{R}_0 <1\), namely \(\beta N-\mu-\gamma<0\), the desired assertion follows.
\end{proof}

Finally, we examine the persistence scenario when the reproduction number exceeds unity.
\begin{theorem}\label{Persistence}
	If \(\mathcal{R}_0  > 1\), then for any initial value \(I(0) = I_0 \in (0,N)\), the solution of diffusion time-changed SIS model \eqref{eq:model33} satisfies
	\[
	\limsup_{t\to\infty} I(t) \ge \xi \quad \text{and}\quad \liminf_{t\to\infty} I(t) \le \xi \quad \text{a.s.},
	\]
	where \(\xi = N(1 - 1/\mathcal{R}_0 )\) is the endemic equilibrium of the deterministic model.
\end{theorem}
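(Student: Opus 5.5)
The plan is to argue by contradiction from the It\^o expansion of $\log I(t)$ in \eqref{eq:model3.5}, mirroring the classical argument of Gray et al.\ but using Lemma \ref{lemma:211} to kill every $\mathrm{d}E_s$ term. Observe that $\beta N-\mu-\gamma-\beta x=\beta(\xi-x)$, because $\xi=N-(\mu+\gamma)/\beta$. Dividing \eqref{eq:model3.5} by $t$ therefore gives
\begin{equation*}
\frac{\log I(t)}{t}=\frac{\log I_0}{t}+\frac{\beta}{t}\int_0^t(\xi-I(s))\,\mathrm{d}s-\frac{\sigma^2}{2t}\int_0^t(N-I(s))^2\,\mathrm{d}E_s+\frac{M_t}{t}.
\end{equation*}
Since $0<I<N$, the $\mathrm{d}E_s$ integral is bounded in absolute value by $\tfrac12\sigma^2N^2E_t/t$, which tends to $0$ a.s.\ by Lemma \ref{lemma:211}. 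For the martingale term, $\langle M\rangle_t=\sigma^2\int_0^t(N-I(s))^2\,\mathrm{d}E_s\le\sigma^2N^2E_t$. Hence either $\langle M\rangle_\infty<\infty$, in which case $M_t$ converges and $M_t/t\to0$, or the strong law for continuous local martingales gives $M_t/\langle M\rangle_t\to0$, and then $M_t/t=(M_t/\langle M\rangle_t)(\langle M\rangle_t/t)\to0$. Either way $M_t/t\to0$ a.s.

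For the first claim, suppose $\mathbb{P}\{\limsup_{t\to\infty}I(t)<\xi\}>0$. Then there is $\varepsilon>0$ with $\mathbb{P}(\Omega_1)>\varepsilon$, where $\Omega_1=\{\limsup_{t\to\infty}I(t)\le\xi-2\varepsilon\}$. For $\omega\in\Omega_1$ there is $T(\omega)$ such that $I(s)\le\xi-\varepsilon$ for all $s\ge T(\omega)$. On that event,
\begin{equation*}
\liminf_{t\to\infty}\frac{1}{t}\int_0^t(\xi-I(s))\,\mathrm{d}s\ge\varepsilon,
\end{equation*}
so $\liminf_{t\to\infty}\frac1t\log I(t)\ge\beta\varepsilon>0$. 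This forces $I(t)\to\infty$, which contradicts $I(t)<N$.

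For the second claim, suppose $\mathbb{P}(\Omega_2)>0$, where $\Omega_2=\{\liminf_{t\to\infty}I(t)\ge\xi+2\varepsilon\}$. Eventually $I(s)\ge\xi+\varepsilon$ on $\Omega_2$, so the same expansion gives $\limsup_{t\to\infty}\frac1t\log I(t)\le-\beta\varepsilon<0$. Then $I(t)\to0$, which contradicts $I\ge\xi+\varepsilon>0$. Taking the union over rational $\varepsilon$ yields both inequalities almost surely.

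The main obstacle is the justification of $M_t/t\to0$. The noise is driven by $B(E_s)$, which is a continuous local martingale with respect to the time-changed filtration, so the strong law must be applied to $M$ in that filtration with $\langle M\rangle_t$ controlled by $E_t$. I would first treat this conditionally on a fixed path of $D$, where $M$ is a Brownian stochastic integral in the clock $E$, and then pass to the full probability via independence of $B$ and $D$. The remaining steps are routine once the $\mathrm{d}E_s$ contributions are shown to vanish via Lemma \ref{lemma:211}.
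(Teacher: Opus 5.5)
Your proposal is correct and follows the paper's proof essentially step for step. Both write $\log I(t)$ via the time-changed It\^o formula, use Lemma \ref{lemma:211} to make the $\mathrm{d}E_s$ contributions negligible, and derive a contradiction from the sign of $\beta(\xi-I)$ on the events $\{\limsup I\le\xi-2\varepsilon\}$ and $\{\liminf I\ge\xi+2\varepsilon\}$. The one genuine addition is your explicit justification of $M_t/t\to0$ through $\langle M\rangle_t\le\sigma^2N^2E_t$ and the dichotomy on $\langle M\rangle_\infty$, a step the paper leaves implicit by simply citing ``the strong law for martingales.''
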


\begin{proof}
	From \eqref{eq:model3.5}, we write
	\begin{equation}\label{eq:logIt}
		\log I(t) = \log I_0 + A_t + C_t + M_t,
	\end{equation}
	with
	\begin{align*}
		A_t &= \int_0^t (\beta N - \mu - \gamma - \beta I(s)) \mathrm{d}s, \\
		C_t &= -\frac12 \sigma^2 \int_0^t (N - I(s))^2 \mathrm{d}E_s, \\
		M_t &= \int_0^t \sigma (N - I(s)) \mathrm{d}B(E_s).
	\end{align*}
	
	By the strong law for martingales, \(\lim_{t\to\infty} M_t/t = 0\) a.s. For \(C_t\), since \(N - I(s) \le N\), we have
	\begin{equation*}
		-\frac12 \sigma^2 N^2 E_t\leq C_t\leq 0.
	\end{equation*}
	Again using Lemma \ref{lemma:211}, we get \(\lim_{t\to\infty} C_t/t = 0\) a.s.
	
	Now, define \(f(x) = \beta N - \mu - \gamma - \beta x\). Note that \(f(x)\) is strictly decreasing in \(x\) and \(f(\xi) = 0\). We first prove \(\limsup_{t\to\infty} I(t) \ge \xi\) a.s. Suppose, to the contrary, that there exists \(\varepsilon > 0\) such that
	\[
	\mathbb{P}\left(\limsup_{t\to\infty} I(t) \le \xi - 2\varepsilon\right) > 0.
	\]
	Then, for every \(\omega\) in this event, there exists a random time \(T_{\omega} > 0\) such that
	\begin{equation}\label{eq:Itomega}
		I(t,\omega)\leq \xi-\varepsilon, \quad t\geq T_{\omega}.
	\end{equation}
	Since $f(x)=\beta(\xi-x)$, we have
	\[
	f(\xi-\varepsilon)=\beta\varepsilon=:\eta>0.
	\]
	Consequently, for all \(t\geq T_{\omega}\), 
	\[
	f(I(t,\omega))\geq f(\xi-\varepsilon)=\eta.
	\]
	Hence,
	\[
	A_t \ge \int_0^{T_{\omega}} f(I(s)) \mathrm{d}s + \eta (t - T_{\omega}).
	\]
	Dividing both sides of \eqref{eq:logIt} by \(t\) and letting \(t \to \infty\), we obtain
	\[
	\liminf_{t\to\infty} \frac{\log I(t)}{t} \ge \eta > 0,
	\]
	which implies \(I(t) \to \infty\), contradicting \eqref{eq:Itomega}. Hence, 
	\[
	\limsup_{t\to\infty} I(t) \ge \xi \quad \text{a.s.}
	\]
	holds. The proof of \(\liminf_{t\to\infty} I(t) \le \xi\) a.s. is similar. Assume there exists \(\delta > 0\) such that
	\[
	\mathbb{P}\left(\liminf_{t\to\infty} I(t) \ge \xi + 2\delta\right) > 0.
	\]
	Then there exists \(T'_{\omega} > 0\) such that 
	\begin{equation}\label{eq:Itomega2}
		I(t,\omega) \ge \xi + \delta,\quad  t \ge T'_{\omega}.
	\end{equation}
	Note that \(f(\xi + \delta) =-\beta\delta=: -\zeta < 0\). Hence for \(t \ge T'_{\omega}\), we have
	\[
	A_t \le \int_0^{T'_{\omega}} f(I(s)) \mathrm{d}s - \zeta (t - T'_{\omega}).
	\]
	
	Again, dividing by \(t\) and taking the limit superior yields
	\[
	\limsup_{t\to\infty} \frac{\log I(t)}{t} \le -\zeta < 0,
	\]
	which implies \(I(t) \to 0\), contradicting \eqref{eq:Itomega2}. Therefore, \(\liminf_{t\to\infty} I(t) \le \xi\) a.s. The
	proof is therefore complete.
\end{proof}
\section{Strong convergence}\label{sec:nummethod}
In this section, we establish strong convergence of the LEM approximation for the diffusion time-changed SIS model \eqref{eq:model33}. Since the drift term evolves in physical time while only the diffusion term is driven by the time-changed Brownian motion, the standard duality principle \cite{Jum2016} for fully time-changed SDEs is not directly applicable to model \eqref{eq:model33}. Therefore, we construct and analyze the numerical approximation directly on a uniform physical-time grid.

Throughout this section, $p\geq2$ denotes a fixed integer whenever a $p$-th moment estimate is stated.
Unless otherwise specified, $C>0$ denotes a generic positive constant that may depend on the parameters $p$, $T$, $\alpha$, $\beta$, $\mu$, $\gamma$, $\sigma$, and $N$, but is always independent of the discretization step sizes $\Delta$ and $\delta$.
Its value may vary from line to line.

We introduce the transformed state variable
\begin{equation*}
	y(t) := \log\frac{I(t)}{N-I(t)}, \qquad q := \sigma N,
\end{equation*}
which is a logarithmic Lamperti type transformation. Since the quadratic variation of the time-changed Brownian motion $B(E_t)$ is $E_t$, applying the time-changed It\^{o} formula yields
\begin{equation}\label{eq:transformed2}
	\mathrm{d}y(t)=F(y(t))\mathrm{d}t+G(y(t))\mathrm{d}E_t+q\mathrm{d}B(E_t),
\end{equation}
where the drift and diffusion coefficients are defined by
\begin{equation}\label{eq:def-FG}
	F(x)=\beta N-\mu-\gamma-(\mu+\gamma)e^x,\qquad G(x)=\frac{q^2}{2}-\frac{q^2}{1+e^x}.
\end{equation}
Direct differentiation shows that the derivatives of $F$ and $G$ are given by
\begin{equation*}
	F'(x)=F''(x)=-(\mu+\gamma)e^x,
\end{equation*}
and
\begin{equation*}
	G'(x)=q^2\frac{e^x}{(1+e^x)^2},\qquad G''(x)=q^2\frac{e^x(1-e^x)}{(1+e^x)^3}.
\end{equation*}
Since $\sup_{u>0}\frac{u}{(1+u)^2} = \frac{1}{4}$ and $\sup_{u>0}\frac{u|1-u|}{(1+u)^3} \leq \frac{1}{4}$, the function $G$ and its derivatives are uniformly bounded on $\mathbb R$:
\begin{equation*}
	|G(x)|\leq\frac{q^2}{2},\qquad0\leq G'(x)\leq\frac{q^2}{4},\qquad|G''(x)|\leq\frac{q^2}{4}.
\end{equation*}
Consequently, for some generic constant $C > 0$, the terms involving $F$ and its derivatives satisfy the exponential growth bound
\begin{equation}\label{eq:F-growth-bounds}
	|F'(x)|+|F'(x)F(x)|+\left|F'(x)G(x)+\frac{q^2}{2}F''(x)\right|+|G'(x)F(x)|\leq C(1+e^{2x}),
\end{equation}
while the compound terms involving purely $G$ and its derivatives remain uniformly bounded on $\mathbb R$:
\begin{equation}\label{eq:G-growth-bounds}
	|G(x)|+|G'(x)|+|G''(x)|+\left|G'(x)G(x)+\frac{q^2}{2}G''(x)\right|\leq C,
\end{equation}
for all $x \in \mathbb R$.

Let $\Delta \in (0,1]$ be a uniform step size, and set $t_k = k\Delta$ for $k = 0, 1, \dots, n_\Delta$ with $n_\Delta = \lfloor T/\Delta \rfloor$.
We denote the increments of the time change and Brownian motion over each subinterval by
\begin{equation*}
	\Delta E_k=E_{t_{k+1}}-E_{t_k},\qquad\Delta B_k=B(E_{t_{k+1}})-B(E_{t_k}).
\end{equation*}
The explicit LEM scheme for \eqref{eq:transformed2} is then defined as
\begin{equation}\label{eq:numerical-scheme2}
	X_{k+1}=X_k+F(X_k)\Delta+G(X_k)\Delta E_k+q\Delta B_k,\qquad X_0=y(0),
\end{equation}
and the numerical approximation in the original variable is recovered via
\begin{equation}\label{eq:inverse-diff}
	I_k=\frac{Ne^{X_k}}{1+e^{X_k}}.
\end{equation}
This formulation guarantees $0<I_k<N$ for all $k\geq0$.

\begin{lemma}\label{lem:exp-moment-bounds}
	Let $y(t)$ be the exact solution of \eqref{eq:transformed2} and let $X_k$ be generated by \eqref{eq:numerical-scheme2}.
	For every $p>0$ and $T>0$, there exists a positive constant $C$, independent of $\Delta$, such that
	\begin{align*}
		\sup_{0\leq t\leq T}\mathbb E_B\left[e^{p|y(t)|}\right] &\leq C\exp\{C(T+E_T)\},\\
		\max_{0\leq k\leq n_\Delta}\mathbb E_B\left[e^{pX_k}\right] &\leq C\exp\{C(T+E_T)\}.
	\end{align*}
\end{lemma}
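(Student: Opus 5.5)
The plan is to work conditionally on a fixed realization of $D$, so that $E$ is a deterministic, continuous, nondecreasing path and $\mathbb E_B$ acts only on the Brownian motion. I will bound $e^{py}$ and $e^{-py}$ separately, using $e^{p|y|}\le e^{py}+e^{-py}$. The bound for $X_k$ will come from an exact one-step computation of Gaussian exponential moments. It is enough to treat $p\ge1$: for $0<p<1$, Jensen's inequality gives $\mathbb E_B[e^{p|y|}]\le(\mathbb E_B[e^{|y|}])^{p}$, and the same reduction works for $X_k$.

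\emph{Exact solution.} The key structural fact is that $F$ is bounded above, $F(x)\le \beta N$, while $G$ is bounded. I apply the time-changed It\^o formula to $\Phi_\pm(y)=e^{\pm py}$, localized at $\tau_k=\inf\{t:|y(t)|\ge k\}$, which is finite-valued before explosion because $I\in(0,N)$ by the global existence theorem. The $\mathrm{d}B(E_t)$ term has zero $\mathbb E_B$-mean up to $\tau_k$, since the integrand is bounded there.

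For $e^{py}$, the $\mathrm{d}t$ coefficient is $pF(y)e^{py}\le p\beta N e^{py}$. The $\mathrm{d}E_t$ coefficient is $(pG(y)+\tfrac{p^2q^2}{2})e^{py}\le Ce^{py}$.

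For $e^{-py}$, the $\mathrm{d}t$ coefficient is
\[
-pF(y)e^{-py}=p\bigl(\mu+\gamma-\beta N\bigr)e^{-py}+p(\mu+\gamma)e^{(1-p)y}.
\]
Since $p\ge1$, we have $e^{(1-p)y}\le 1+e^{-py}$: split into the cases $y\ge0$ and $y<0$. The $\mathrm{d}E_t$ coefficient is again at most $Ce^{-py}$, because $G$ is bounded.

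Thus $u_\pm(t):=\mathbb E_B[e^{\pm py(t\wedge\tau_k)}]$ satisfies
\[
u_\pm(t)\le e^{\pm py(0)}+C\int_0^t(1+u_\pm(s))\,\mathrm{d}s+C\int_0^t(1+u_\pm(s))\,\mathrm{d}E_s .
\]
The time-changed Gronwall inequality, used as in the proof of global existence, gives $u_\pm(t)\le C\exp\{C(t+E_t)\}\le C\exp\{C(T+E_T)\}$. Letting $k\to\infty$ with Fatou's lemma removes the localization, and the constant depends on $y(0)$ and $p$ but not on $t$.

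\emph{Numerical scheme.} Given $D$, the increment $\Delta B_k$ is $N(0,\Delta E_k)$ and independent of $\mathcal F_{t_k}$, with $\Delta E_k$ deterministic. Hence
\[
\mathbb E_B\bigl[e^{pX_{k+1}}\mid\mathcal F_{t_k}\bigr]=e^{pX_k}\,e^{pF(X_k)\Delta}\,e^{pG(X_k)\Delta E_k}\,e^{p^2q^2\Delta E_k/2}.
\]
Using $F\le\beta N$ and $|G|\le q^2/2$, this is at most $e^{pX_k}\exp\{C(\Delta+\Delta E_k)\}$. Taking expectations and iterating gives
\[
\mathbb E_B[e^{pX_k}]\le e^{pX_0}\exp\Bigl\{C\Bigl(k\Delta+\sum_{j<k}\Delta E_j\Bigr)\Bigr\}\le e^{py(0)}\exp\{C(T+E_T)\},
\]
uniformly in $k\le n_\Delta$ and in $\Delta$. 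This holds for every real $p>0$ directly, with no reduction needed.

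\emph{Main obstacle.} The delicate part is the exact-solution bound for $e^{-py}$. The drift is exponentially large for large positive $y$, and one has to check that after multiplication by $e^{-py}$ it is still dominated linearly by $1+e^{-py}$. A secondary point is justifying the martingale and Gronwall step for integrals against both $\mathrm{d}s$ and $\mathrm{d}E_s$ with $E$ frozen. I would handle this exactly as in the proof of global existence.
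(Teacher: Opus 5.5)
Your argument is correct. Its skeleton is the paper's: localization, the time-changed It\^o formula, linear domination of the $\mathrm{d}t$ and $\mathrm{d}E_t$ coefficients, time-changed Gronwall, then Fatou. The difference is that you choose different Lyapunov functions for the exact solution.

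The paper stays in the original variable and applies It\^o to $I^{-p}$ and $(N-I)^{-p}$. Because $I\in(0,N)$, the coefficients are dominated linearly for every $p>0$ with no further work. For example, $-p[\beta N-\mu-\gamma-\beta I]\le p(\mu+\gamma)$ and $pI[\beta(N-I)-(\mu+\gamma)](N-I)^{-p-1}\le p\beta N(N-I)^{-p}$. The paper then transfers the bounds to $y$ via $e^{py}\le N^p(N-I)^{-p}$ and $e^{-py}\le N^pI^{-p}$.

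Working instead with $e^{-py}=((N-I)/I)^p$ produces the extra term $p(\mu+\gamma)e^{(1-p)y}$, coming from $-(\mu+\gamma)e^{y}$ in $F$. This term is not dominated by $C(1+e^{-py})$ when $p<1$. That is exactly why you need the case split on the sign of $y$ and the Jensen reduction to $p\ge1$; the paper's choice avoids both. Your version has the minor advantage of never leaving the transformed equation \eqref{eq:transformed2}.

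For the scheme, the two routes are equivalent. The paper sums the recursion and uses $F\le C_1$, $G\le C_2$ to get the pathwise bound $X_k\le X_0+C_1T+C_2E_T+qB(E_{t_k})$, then evaluates a single Gaussian moment generating function. You instead iterate the conditional one-step moment. The pathwise version only needs the law of $B(E_{t_k})$ given $D$, not the conditional independence of the increments $\Delta B_k$ from the past.
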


\begin{proof}
	We first establish the exponential moment bound for the exact solution $y(t)$.
	Let $I(t)$ solve \eqref{eq:model33} and define the stopping time
	\begin{equation*}
		\vartheta_m = \inf\{t\geq0 : I(t)\notin(1/m, N-1/m)\}.
	\end{equation*}
	For $V_{\mathrm{I}}(x)=x^{-p}$,
	\begin{equation*}
		V_{\mathrm{I}}'(x)=-px^{-p-1},\qquad V_{\mathrm{I}}''(x)=p(p+1)x^{-p-2}.
	\end{equation*}
	Using the time-changed It\^{o} formula, we have
	\begin{align}\label{eq:diff-ito-I}
		I(t\wedge\vartheta_m)^{-p}
		&= I_0^{-p} - p\int_0^{t\wedge\vartheta_m} I(s)^{-p}[\beta N-\mu-\gamma-\beta I(s)]\mathrm{d}s \notag\\
		&\quad + \frac{p(p+1)}{2}\sigma^2 \int_0^{t\wedge\vartheta_m} I(s)^{-p}(N-I(s))^2\mathrm{d}E_s  \notag\\
		&\quad - p\sigma\int_0^{t\wedge\vartheta_m} I(s)^{-p}(N-I(s))\mathrm{d}B(E_s).
	\end{align}
	Since $0 < I < N$, we have
	\begin{equation*}
		-p[\beta N-\mu-\gamma-\beta I]=p[\mu+\gamma-\beta(N-I)]\leq p(\mu+\gamma)=:c_1.
	\end{equation*}
	and
	\begin{equation*}
		\frac{p(p+1)}{2}\sigma^2(N-I)^2\leq\frac{p(p+1)}{2}\sigma^2N^2=:c_2.
	\end{equation*}
	By the definition of the stopping time $\vartheta_m$, the stopped stochastic integral in \eqref{eq:diff-ito-I} is a martingale with zero $\mathbb E_B$-expectation.
	Taking $\mathbb E_B$ on both sides of \eqref{eq:diff-ito-I} and using the preceding estimates, we obtain
	\begin{align*}
		\mathbb E_B\left[I(t\wedge\vartheta_m)^{-p}\right] \leq I_0^{-p} + c_1\int_0^t \mathbb E_B\left[I(s\wedge\vartheta_m)^{-p}\right]\mathrm{d}s + c_2\int_0^t \mathbb E_B\left[I(s\wedge\vartheta_m)^{-p}\right]\mathrm{d}E_s.
	\end{align*}
	Then, by the time-changed Gronwall inequality, we have
	\begin{equation*}
		\mathbb E_B\left[I(t\wedge\vartheta_m)^{-p}\right] \leq I_0^{-p}\exp\{c_1t+c_2E_t\}.
	\end{equation*}
	Letting $m\to\infty$ and applying Fatou's lemma, we obtain, for every $t\in[0,T]$,
	\begin{equation*}
		\mathbb E_B\left[I(t)^{-p}\right] \leq I_0^{-p}\exp\{c_1t+c_2E_t\}.
	\end{equation*}
	Since $t\leq T$ and $E_t\leq E_T$, taking the supremum over $t\in[0,T]$ gives
	\begin{equation*}
		\sup_{0\leq t\leq T}\mathbb E_B\left[I(t)^{-p}\right] \leq I_0^{-p}\exp\{c_1T+c_2E_T\}.
	\end{equation*}
	
	To obtain the corresponding bound for $N-I(t)$, we set $J(t)=N-I(t)$ and $V_{\mathrm{J}}(I)=(N-I)^{-p}=J^{-p}$.
	From \eqref{eq:model33},
	\begin{equation*}
		\mathrm{d}J(t)=-I(t)[\beta J(t)-(\mu+\gamma)]\mathrm{d}t-\sigma I(t)J(t)\mathrm{d}B(E_t).
	\end{equation*}
	Using the time-changed It\^{o} formula, we have
	\begin{align}\label{eq:diff-ito-J}
		J(t\wedge\vartheta_m)^{-p}
		&= J(0)^{-p} + p\int_0^{t\wedge\vartheta_m} I(s)[\beta J(s)-(\mu+\gamma)]J(s)^{-p-1}\mathrm{d}s \notag\\
		&\quad + \frac{p(p+1)}{2}\sigma^2 \int_0^{t\wedge\vartheta_m} I(s)^2J(s)^{-p}\mathrm{d}E_s + p\sigma\int_0^{t\wedge\vartheta_m} I(s)J(s)^{-p}\mathrm{d}B(E_s).
	\end{align}
	Since $0 < I < N$, direct calculation shows that
	\begin{equation*}
		pI[\beta J-(\mu+\gamma)]J^{-p-1}\leq p\beta I J^{-p}\leq c_3J^{-p},\qquad\frac{p(p+1)}{2}\sigma^2I^2J^{-p}\leq c_2J^{-p},
	\end{equation*}
	where $c_3 = p\beta N$.
	Similarly, the stopped stochastic integral in \eqref{eq:diff-ito-J} is a martingale with zero $\mathbb E_B$-expectation.
	Taking $\mathbb E_B$ on both sides of \eqref{eq:diff-ito-J} and applying Gronwall's inequality and Fatou's lemma as $m\to\infty$, we obtain
	\begin{equation*}
		\sup_{0\leq t\leq T}\mathbb E_B\left[(N-I(t))^{-p}\right] \leq (N-I_0)^{-p}\exp\{c_3T+c_2E_T\}.
	\end{equation*}
	
	Since $y(t) = \log\frac{I(t)}{N-I(t)}$, we have
	\begin{equation*}
		e^{py(t)}\leq N^p(N-I(t))^{-p},\qquad e^{-py(t)}\leq N^pI(t)^{-p}.
	\end{equation*}
	Using the elementary inequality $e^{p|x|}\leq e^{px}+e^{-px}$ for $x\in\mathbb R$, we deduce that
	\begin{align*}
		\sup_{0\leq t\leq T}\mathbb E_B\left[e^{p|y(t)|}\right] &\leq N^p\left[\sup_{0\leq t\leq T}\mathbb E_B\left[(N-I(t))^{-p}\right] + \sup_{0\leq t\leq T}\mathbb E_B\left[I(t)^{-p}\right]\right] \\
		&\leq C\exp\{C(T+E_T)\}.
	\end{align*}
	
	We next establish the exponential moment bound for the numerical solution $X_k$.
	By \eqref{eq:def-FG}, there exist constants $C_1,C_2>0$ such that
	\begin{equation*}
		F(x)\leq C_1,\qquad G(x)\leq C_2,\qquad x\in\mathbb R.
	\end{equation*}
	Iteration of \eqref{eq:numerical-scheme2} therefore gives
	\begin{equation*}
		X_k\leq X_0+C_1t_k+C_2E_{t_k}+qB(E_{t_k})\leq X_0+C_1T+C_2E_T+qB(E_{t_k}).
	\end{equation*}
	Taking the exponential on both sides of the preceding inequality and then taking $\mathbb E_B$, we obtain from the moment-generating function of Brownian motion that
	\begin{align*}
		\mathbb E_B\left[e^{pX_k}\right]&\leq\exp\{pX_0+pC_1T+pC_2E_T\}\mathbb E_B\left[e^{pqB(E_{t_k})}\right]\\
		&=\exp\left\{pX_0+pC_1T+pC_2E_T+\frac{p^2q^2}{2}E_{t_k}\right\}\\
		&\leq\exp\{pX_0+pC_1T+C_3E_T\}\leq C\exp\{C(T+E_T)\},
	\end{align*}
	where $C_3=pC_2+p^2q^2/2$.
	Taking the maximum over $0\leq k\leq n_\Delta$ completes the proof.
\end{proof}

\begin{lemma}\label{lem:sharp-clock}
	Suppose that $E$ is the inverse of a standard $\alpha$-stable subordinator with index $\alpha\in(0,1)$.
	For every fixed integer $p\geq2$, $c>0$, and $T>0$, there exists a positive constant $C$, independent of $\Delta$, such that
	\begin{equation*}
		\mathbb E\left[e^{cE_T}\sum_{k=0}^{n_\Delta-1}(\Delta E_k)^{p+1}\right]\leq C\Delta^{p\alpha}.
	\end{equation*}
\end{lemma}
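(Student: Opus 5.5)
Since $\sum_k \Delta E_k = E_{t_{n_\Delta}}\le E_T$, a crude bound $\max_k(\Delta E_k)^p\cdot E_T$ would only give a pathwise estimate without the rate, so I will work term by term. The plan is to apply H\"older's inequality to each summand to separate the exponential weight from the increment, and then use the self-similarity of the inverse $\alpha$-stable subordinator to extract the factor $\Delta^{\alpha}$ from each increment. Concretely, for each $k$,
\begin{equation*}
\mathbb E\left[e^{cE_T}(\Delta E_k)^{p+1}\right]\leq \left(\mathbb E\left[e^{2cE_T}\right]\right)^{1/2}\left(\mathbb E\left[(\Delta E_k)^{2(p+1)}\right]\right)^{1/2}.
\end{equation*}
The first factor is finite and independent of $\Delta$ by Lemma \ref{lemma:29}(1), since $\psi(r)=r^\alpha$ is regularly varying with index $\rho=\alpha$ and $r=1<1/(1-\alpha)$. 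Summing over the $n_\Delta\le T/\Delta$ terms therefore reduces the claim to a bound of the form $\mathbb E[(\Delta E_k)^{m}]\le C\Delta^{m\alpha}$ with $m=2(p+1)$. This would give a total of order $\Delta^{-1}\Delta^{(p+1)\alpha}$. That is only $\le C\Delta^{p\alpha}$ when $\alpha\ge 1$, so the crude summation loses one power, and the key step is to avoid it.

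To avoid the loss I will not bound each term separately. Instead I will use
\begin{equation*}
\sum_k(\Delta E_k)^{p+1}\le \Bigl(\max_k \Delta E_k\Bigr)^{p}\sum_k\Delta E_k\le \Bigl(\max_k\Delta E_k\Bigr)^{p}E_T .
\end{equation*}
Absorbing $E_T\le C_\epsilon e^{\epsilon E_T}$ into the exponential weight and applying H\"older again, it suffices to show
\begin{equation*}
\mathbb E\Bigl[\max_{0\le k<n_\Delta}(\Delta E_k)^{2p}\Bigr]\le C\Delta^{2p\alpha},
\end{equation*}
possibly up to a harmless loss that must then be removed.

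For the moments of increments I will use the regenerative structure of $E$. Conditioning on $\mathcal F_{t_k}$, the increment $\Delta E_k$ is stochastically dominated by $E_\Delta$, the increment started afresh. This holds because $E_{t+s}-E_t$ equals $E'_{(s-(D_{E_t}-t))^+}$ for an independent copy $E'$, by the strong Markov property of $D$ at the hitting time. Self-similarity gives $E_\Delta\overset{d}{=}\Delta^{\alpha}E_1$, and $E_1$ has Mittag-Leffler moments of all orders, indeed exponential moments. Hence $\mathbb E[(\Delta E_k)^m]\le \Delta^{m\alpha}\mathbb E[E_1^m]$. For the maximum, I will bound
\begin{equation*}
\mathbb E\bigl[\max_k(\Delta E_k)^{2p}\bigr]\le \Bigl(\sum_k\mathbb E[(\Delta E_k)^{2pr}]\Bigr)^{1/r}\le C(T/\Delta)^{1/r}\Delta^{2p\alpha}
\end{equation*}
with $r$ large. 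This yields $\Delta^{p\alpha-\epsilon'}$ after the square root. To remove the arbitrarily small loss, I would instead use the exponential tail of $E_1$: a union bound gives $\max_k\Delta E_k\le \Delta^\alpha\, C\log(1/\Delta)$ except on an event of tiny probability.

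The main obstacle is getting exactly $\Delta^{p\alpha}$ rather than $\Delta^{p\alpha}$ times a logarithm or $\Delta^{-\epsilon}$. My fallback is to weight differently. I would write $\sum_k(\Delta E_k)^{p+1}$ and compare it with the scaling relation $\mathbb E[(\Delta E_k)^{p+1}]\approx \Delta^{(p+1)\alpha}$, noting that for the stable inverse the increment over $[t_k,t_{k+1}]$ is nonzero with probability only of order $(\Delta/t_k)^{1-\alpha}$. This is the probability that $D$ crosses into the interval, and it restores the missing factor: $\sum_k \Delta^{(p+1)\alpha}(\Delta/t_k)^{1-\alpha}\lesssim \Delta^{(p+1)\alpha}\Delta^{-\alpha}T^{\alpha}=C\Delta^{p\alpha}$. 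Making this crossing-probability estimate rigorous, via the Green function $u(x)\propto x^{\alpha-1}$ of $D$, is the step I expect to be hardest. Combined with the H\"older split against $e^{cE_T}$, using moments slightly above $p+1$, it gives the claim.
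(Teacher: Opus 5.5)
Your heuristic for where the rate comes from is right. The gap is the last step: separating $e^{cE_T}$ from each summand by H\"older (``moments slightly above $p+1$'') discards exactly the sparsity you identified. Let $R_k=D_{E_{t_k}}-t_k$ be the overshoot. Your regeneration identity gives $\Delta E_k=E'_{(\Delta-R_k)^+}$ with $E'$ an independent copy, so for $k\ge1$
\[
\mathbb E[(\Delta E_k)^{(p+1)q}]\asymp\Delta^{(p+1)q\alpha}\,\mathbb P(R_k<\Delta)\asymp\Delta^{(p+1)q\alpha}k^{\alpha-1}.
\]
After the $1/q$-th root the crossing factor becomes $k^{(\alpha-1)/q}$, and $\sum_{k\le T/\Delta}k^{(\alpha-1)/q}\asymp\Delta^{-1+(1-\alpha)/q}$. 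This route therefore yields at best $\Delta^{p\alpha-(1-\alpha)(1-1/q)}$, i.e.\ $C_\epsilon\Delta^{p\alpha-\epsilon}$, not $\Delta^{p\alpha}$. That is the same kind of loss as your max route, which also cannot be repaired, since the uniform modulus of continuity of $E$ at scale $\Delta$ is of order $\Delta^{\alpha}(\log(1/\Delta))^{1-\alpha}$. Applying H\"older to the whole sum instead would require $\bigl\|\sum_k(\Delta E_k)^{p+1}\bigr\|_{L^q}\le C\Delta^{p\alpha}$ for some $q>1$. That needs joint crossing probabilities for several cells, which your proposal does not address.

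The missing idea is to carry the weight through the regeneration rather than split it off. By the strong Markov property of $D$ at $E_{t_k}$, the copy $E'$ is independent of $(E_{t_k},R_k)$, with $\Delta E_k\le E'_\Delta\mathbf 1_{\{R_k<\Delta\}}$ and $E_T\le E_{t_k}+E'_T$. Hence
\[
\mathbb E\bigl[e^{cE_T}(\Delta E_k)^{p+1}\bigr]\le \mathbb E\bigl[e^{cE_{t_k}}\mathbf 1_{\{R_k<\Delta\}}\bigr]\,\mathbb E\bigl[e^{cE_T}E_\Delta^{p+1}\bigr].
\]
Cauchy--Schwarz on the second factor is now harmless and gives $C\Delta^{(p+1)\alpha}$. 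For the first factor, the compensation formula for the jumps of $D$ gives $\int_0^{t_k}u_c(y)\,\nu\bigl((t_k-y,t_k-y+\Delta)\bigr)\,\mathrm dy$, where $u_c(y)=\int_0^\infty e^{cu}p_{D_u}(y)\,\mathrm du=y^{\alpha-1}E_{\alpha,\alpha}(cy^{\alpha})\le C_T\,y^{\alpha-1}$ on $(0,T]$. Comparing with the case $c=0$, this is at most $C\,\mathbb P(R_k<\Delta)\le C(\Delta/t_k)^{1-\alpha}$ by the Dynkin--Lamperti law for the overshoot, so the step you feared is standard. Summing, $\Delta^{(p+1)\alpha}\bigl(1+\sum_{k=1}^{n_\Delta}k^{\alpha-1}\bigr)\le C\Delta^{p\alpha}$.

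This $u_c$ is exactly the paper's weighted renewal density $r_z$ with $z=1+c$. The paper Poissonizes via $\mathcal N_t=P(E_t)$ and uses the factorial Campbell formula to write $\mathbb E[e^{cE_T}(E_b-E_a)^{p+1}]$ as an iterated integral of $r_z$. A Dirichlet integral then gives $C(b-a)^{p\alpha}\int_a^b s^{\alpha-1}\,\mathrm ds$. Note that $\Delta^{p\alpha}\int_{t_k}^{t_{k+1}}s^{\alpha-1}\,\mathrm ds\approx\Delta^{(p+1)\alpha}(\Delta/t_k)^{1-\alpha}$, which is precisely your heuristic.
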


\begin{proof}
	Let $\ell=p+1$, let $P$ be a rate-one Poisson process independent of $D$, and define
	\begin{equation*}
		\mathcal N_t=P(E_t).
	\end{equation*}
	Since $P$ and $D$ are independent and $E$ is measurable with respect to $D$, we use $\mathbb E_P$ and $\mathbb E_D$ for the corresponding partial expectations.
	Taking $\mathbb E_P$ first, $\mathcal N$ is a Poisson point process on physical time with intensity measure $\mathrm{d}E_t$.
	After taking $\mathbb E_D$, $\mathcal N$ is a renewal process.
	
	Let $\varsigma$ be an exponential random variable with mean one and independent of $D$.
	The renewal waiting time has the same distribution as $D(\varsigma)$ \cite{Meerschaert2011}.
	Let $f_0$ denote the density of $D(\varsigma)$.
	Its Laplace transform is
	\begin{equation*}
		\widehat f_0(\lambda)=\mathbb E\left[e^{-\lambda D(\varsigma)}\right]=\int_0^\infty e^{-s}e^{-s\lambda^\alpha}\mathrm{d}s=\frac{1}{1+\lambda^\alpha}.
	\end{equation*}
	
	Fix $z=1+c>1$ and write $(m)_{[\ell]}=m(m-1)\cdots(m-\ell+1)$.
	The factorial Campbell formula \cite{breton2014factorial} gives, for $0\leq a<b\leq T$,
	\begin{equation}\label{eq:conditional-campbell}
		\mathbb E_P\left[\bigl(\mathcal N(a,b]\bigr)_{[\ell]}z^{\mathcal N_T-\ell}\right]=(E_b-E_a)^\ell e^{(z-1)E_T}.
	\end{equation}
	
	Define the weighted renewal density
	\begin{equation*}
		r_z(t)=\sum_{m=1}^\infty z^{m-1}f_0^{*m}(t),
	\end{equation*}
	where $f_0^{*m}$ denotes the $m$-fold convolution of $f_0$.
	For every sufficiently large $\lambda>(z-1)^{1/\alpha}$, the convolution property and the geometric-series formula give
	\begin{equation*}
		\widehat r_z(\lambda)=\sum_{m=1}^\infty z^{m-1}\bigl(\widehat f_0(\lambda)\bigr)^m=\frac{\widehat f_0(\lambda)}{1-z\widehat f_0(\lambda)}=\frac{1}{\lambda^\alpha-(z-1)}.
	\end{equation*}
	The inverse Laplace transform therefore yields
	\begin{equation*}
		r_z(t)=t^{\alpha-1}E_{\alpha,\alpha}((z-1)t^\alpha)\leq C_{\alpha,z,T}t^{\alpha-1},\qquad0<t\leq T,
	\end{equation*}
	where $E_{\alpha,\alpha}$ is the two-parameter Mittag--Leffler function.
	
	The inverse-stable moment formula gives
	\begin{equation*}
		\mathbb E_D\left[E_t^m\right]=\frac{m!t^{\alpha m}}{\Gamma(1+\alpha m)},\qquad m=0,1,2,\ldots.
	\end{equation*}
	Consequently, the probability generating function of $\mathcal N_t$ is
	\begin{align*}
		H_z(t)&:=\mathbb E\left[z^{\mathcal N_t}\right]=\mathbb E_D\left[e^{(z-1)E_t}\right]=E_{\alpha,1}((z-1)t^\alpha).
	\end{align*}
	The continuity of the Mittag--Leffler function implies that $H_z$ is bounded on $[0,T]$.
	
	We next order the $\ell$ selected renewal epochs as
	\begin{equation*}
		a<s_1<s_2<\cdots<s_\ell\leq b.
	\end{equation*}
	The renewal property and the $\ell!$ possible permutations give
	\begin{align*}
		\mathbb E &\left[
		\bigl(\mathcal N(a,b]\bigr)_{[\ell]}z^{\mathcal N_T-\ell}
		\right]\\
		&=\ell!\int_a^b
		\int_{s_1}^b
		\cdots
		\int_{s_{\ell-1}}^b
		r_z(s_1)
		\prod_{j=2}^{\ell}r_z(s_j-s_{j-1})
		H_z(T-s_\ell)
		\mathrm{d}s_\ell\cdots\mathrm{d}s_1.
	\end{align*}
	Taking $\mathbb E_D$ on both sides of \eqref{eq:conditional-campbell} and applying the bounds for $r_z$ and $H_z$, we obtain
	\begin{align*}
		\mathbb E& \left[e^{cE_T}(E_b-E_a)^{p+1}\right]\\
		&\leq
		C\int_a^b s_1^{\alpha-1}
		\int_0^{b-s_1}
		\int_0^{b-s_1-u_2}\cdots
		\int_0^{b-s_1-\sum_{j=2}^{p}u_j}
		\prod_{j=2}^{p+1}u_j^{\alpha-1}
		\mathrm{d}u_{p+1}\cdots\mathrm{d}u_2\mathrm{d}s_1.
	\end{align*}
	The Dirichlet integral satisfies
	\begin{equation*}
		\int_0^{b-s_1}
		\int_0^{b-s_1-u_2}\cdots
		\int_0^{b-s_1-\sum_{j=2}^{p}u_j}
		\prod_{j=2}^{p+1}u_j^{\alpha-1}
		\mathrm{d}u_{p+1}\cdots\mathrm{d}u_2
		=\frac{\Gamma(\alpha)^p}{\Gamma(p\alpha+1)}(b-s_1)^{p\alpha}.
	\end{equation*}
	It follows that
	\begin{equation}\label{eq:weighted-clock-increment}
		\mathbb E\left[e^{cE_T}(E_b-E_a)^{p+1}\right]\leq C(b-a)^{p\alpha}\int_a^b s^{\alpha-1}\mathrm{d}s.
	\end{equation}
	Applying \eqref{eq:weighted-clock-increment} with $a=t_k$ and $b=t_{k+1}$ gives
	\begin{equation*}
		\mathbb E\left[e^{cE_T}(\Delta E_k)^{p+1}\right]\leq C\Delta^{p\alpha}\int_{t_k}^{t_{k+1}}s^{\alpha-1}\mathrm{d}s.
	\end{equation*}
	Therefore,
	\begin{align*}
		\mathbb E\left[e^{cE_T}\sum_{k=0}^{n_\Delta-1}(\Delta E_k)^{p+1}\right]&\leq C\Delta^{p\alpha}\int_0^T s^{\alpha-1}\mathrm{d}s\leq C\Delta^{p\alpha}.
	\end{align*}
	This completes the proof.
\end{proof}

\begin{theorem}\label{thm:strong-conv-diffusion}
	Suppose that $E$ is the inverse of a standard $\alpha$-stable subordinator with index $\alpha\in(0,1)$ and is independent of $B$.
	Let $p\geq2$ be a fixed integer, let $y(t)$ be the exact solution of \eqref{eq:transformed2}, and let $X_k$ be generated by \eqref{eq:numerical-scheme2}.
	Then there exists a positive constant $C$, independent of $\Delta$, such that
	\begin{equation*}
		\max_{0\leq k\leq n_\Delta}\mathbb E\left[|X_k-y(t_k)|^p\right]\leq C\Delta^{p\alpha}.
	\end{equation*}
	Moreover, the approximation $I_k$ defined by \eqref{eq:inverse-diff} satisfies
	\begin{equation*}
		\max_{0\leq k\leq n_\Delta}\mathbb E\left[|I_k-I(t_k)|^p\right]\leq C\Delta^{p\alpha}.
	\end{equation*}
\end{theorem}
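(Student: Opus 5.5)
We work conditionally on a realization of $D$, estimate the error under $\mathbb E_B$, and take $\mathbb E_D$ only at the end. Then Lemma \ref{lem:sharp-clock} and Lemma \ref{lemma:29} absorb the random factors $e^{CE_T}$. Set $e_k=X_k-y(t_k)$. Since the additive noise $q\,\mathrm dB(E_t)$ is reproduced exactly by the scheme \eqref{eq:numerical-scheme2}, subtracting the integral form of \eqref{eq:transformed2} over $[t_k,t_{k+1}]$ gives
\begin{equation*}
e_{k+1}=e_k+\bigl(F(X_k)-F(y(t_k))\bigr)\Delta+\bigl(G(X_k)-G(y(t_k))\bigr)\Delta E_k-R_k,
\end{equation*}
\begin{equation*}
R_k=\int_{t_k}^{t_{k+1}}\bigl(F(y(s))-F(y(t_k))\bigr)\mathrm ds+\int_{t_k}^{t_{k+1}}\bigl(G(y(s))-G(y(t_k))\bigr)\mathrm dE_s .
\end{equation*}

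\textbf{Stability.} The key structural fact is that $F$ is decreasing, since $F'=-(\mu+\gamma)e^x<0$. Hence $(F(X_k)-F(y(t_k)))e_k\le0$, and the drift difference is one-sided dissipative even though it is not globally Lipschitz. To exploit this we expand $|e_{k+1}|^p$, or equivalently sum $e_{k+1}^2-e_k^2$ and raise to the power $p/2$. The cross term $2e_k(F(X_k)-F(y(t_k)))\Delta$ is nonpositive. The $G$-difference is controlled by $|G'|\le q^2/4$, giving a term $\le C|e_k|^2\Delta E_k$. The quadratic remainder $|F(X_k)-F(y(t_k))|^2\Delta^2$ is bounded using the mean value theorem, giving $C(e^{2X_k}+e^{2y(t_k)})|e_k|^2\Delta^2$. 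This term is the awkward one. I would control it by Hölder's inequality together with the exponential moments of Lemma \ref{lem:exp-moment-bounds}. If that does not close, I would try a localization argument: on the set where $|X_k|$ and $|y(t_k)|$ are at most $\log\Delta^{-1/4}$ the factor is $O(\Delta^{-1/2})$, so the term is $O(\Delta^{3/2})|e_k|^2$. The complement has probability $O(\Delta^{m})$ for every $m$, by the same exponential moments and Chebyshev's inequality. Either way the result should be a discrete Gronwall-type recursion
\begin{equation*}
\mathbb E_B|e_{k+1}|^p\le \mathbb E_B|e_k|^p\,(1+C\Delta+C\Delta E_k)+C\,\mathbb E_B\bigl[|e_k|^{p-1}|R_k|\bigr]+C\,\mathbb E_B|R_k|^p .
\end{equation*}

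\textbf{Local error.} Next we bound $R_k$ by applying the time-changed Itô formula to $F(y(s))$ and $G(y(s))$ on $[t_k,s]$. Each increment $F(y(s))-F(y(t_k))$ splits into three parts: a $\mathrm ds$ integral, a $\mathrm dE$ integral, and a $\mathrm dB(E)$ integral. Their integrands are exactly the compound terms in \eqref{eq:F-growth-bounds} and \eqref{eq:G-growth-bounds}, namely $F'F$, $F'G+\tfrac{q^2}{2}F''$, and $qF'$, with the analogous terms for $G$. These are bounded either by $C(1+e^{2|y|})$ or by a constant, and Lemma \ref{lem:exp-moment-bounds} bounds their $\mathbb E_B$-moments by $Ce^{C(T+E_T)}$. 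Hence the worst contribution to $\mathbb E_B|R_k|^p$ comes from the martingale piece inside the $\mathrm dE$ integral, which is of size $\Delta E_k\cdot(\Delta E_k)^{1/2}$. After Burkholder–Davis–Gundy and Hölder, this contribution looks like $Ce^{CE_T}\bigl[\Delta^{p}+(\Delta E_k)^{p+1}+\Delta^p(\Delta E_k)^{p/2}+\dots\bigr]$.

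\textbf{Main obstacle.} The hardest step is to avoid the crude bound $|e_k|^{p-1}|R_k|$ via Young's inequality, which would lose half an order. I would use instead that the $\mathrm dB(E)$ parts of $R_k$ are conditionally mean-zero given $\mathcal F_{t_k}$ and the path of $D$. In the cross term they therefore contribute only through their drift parts, which are of size $\Delta E_k\cdot\Delta E_k$ or $\Delta\cdot\Delta E_k$. After Young's inequality this yields $\Delta E_k|e_k|^p$ plus the term $(\Delta E_k)^{p+1}$, weighted by $e^{CE_T}$.

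\textbf{Closing.} Iterating the recursion and applying the discrete time-changed Gronwall inequality gives
\begin{equation*}
\max_k\mathbb E_B|e_k|^p\le Ce^{C(T+E_T)}\Bigl(\Delta^{p}+\sum_k(\Delta E_k)^{p+1}\Bigr).
\end{equation*}
Taking $\mathbb E_D$, Lemma \ref{lem:sharp-clock} gives the bound $C\Delta^{p\alpha}$, using that $\Delta^p\le\Delta^{p\alpha}$ and that $\mathbb E e^{CE_T}<\infty$ by Lemma \ref{lemma:29}. Finally, the map $x\mapsto Ne^x/(1+e^x)$ is Lipschitz with constant $N/4$. Hence $|I_k-I(t_k)|\le\tfrac N4|e_k|$, and the second estimate follows immediately.
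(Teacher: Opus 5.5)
Your architecture matches the paper's. You work under $\mathbb E_B$ for a frozen path of $D$ and It\^o-expand the local remainder into drift pieces of sizes $\Delta^2$, $\Delta\,\Delta E_k$, $(\Delta E_k)^2$ and martingale pieces of sizes $\Delta(\Delta E_k)^{1/2}$, $(\Delta E_k)^{3/2}$. The martingale part drops out of the signed first-order cross term, and Young's inequality with step-size weights handles the rest; the paper uses $h_k=\Delta+\Delta E_k$ throughout, so that $\sum_k h_k\le T+E_T$. This gives $Ce^{C(T+E_T)}\bigl(\Delta^p+\sum_k(\Delta E_k)^{p+1}\bigr)$, and Lemmas \ref{lemma:29} and \ref{lem:sharp-clock} close the argument.

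\textbf{The gap.} The step that is not yet a proof is the non-Lipschitz drift contribution $\Delta^2(e^{2X_k}+e^{2y(t_k)})|e_k|^p$, together with $\Delta^p|e^{X_k}-e^{y(t_k)}|^p$.

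Your first option does not close. H\"older separates the exponential factor from $|e_k|^p$ only at the price of a strictly higher moment of $e_k$. Bounding that moment by a constant leaves $O(\Delta^2)$ per step, hence $O(\Delta)$ in total. That is only order $1/p$ in $L^p$, which falls short of $\alpha$ whenever $\alpha>1/p$.

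Your localization can be made to work, but only as an upper truncation $\{X_k\vee y(t_k)\le\frac{1}{4}\log\Delta^{-1}\}$. Lemma \ref{lem:exp-moment-bounds} controls $\mathbb E_B[e^{pX_k}]$, not $\mathbb E_B[e^{p|X_k|}]$. Negative exponential moments of the explicit scheme are in general infinite, because a large $X_k$ is followed by $X_{k+1}\approx-(\mu+\gamma)\Delta e^{X_k}$. So Chebyshev with the available moments says nothing about the lower tail. Fortunately that tail is irrelevant, since $e^{2x}$ is small there. On the exceptional set you also need polynomial moments of $X_k$, which you would have to derive separately from the scheme.

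\textbf{The paper's device.} The paper avoids all of this. Group the deterministic part as $Z_k=(1+\theta_k\Delta E_k)e_k-(\mu+\gamma)\Delta d_k$, where $d_k=e^{X_k}-e^{y(t_k)}$ and $0\le\theta_k\le q^2/4$.

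Since $e_k$ and $d_k$ have the same sign, $|Z_k|^p\le(1+\frac{q^2}{4}\Delta E_k)^p|e_k|^p+C\Delta^p|d_k|^p$. Next, $|e^x-e^y|\le(e^x+e^y)(1\wedge|x-y|)$ gives $|d_k|^p\le(e^{X_k}+e^{y(t_k)})^p|e_k|$. Young's inequality then yields $\Delta^p|d_k|^p\le\frac{\Delta}{p}|e_k|^p+C\Delta^{p+1}(e^{X_k}+e^{y(t_k)})^{p^2/(p-1)}$. This uses only the positive exponential moments of Lemma \ref{lem:exp-moment-bounds} and contributes $O(\Delta^p)$ after summation.

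\textbf{A smaller point.} Your displayed recursion with $C\,\mathbb E_B[|e_k|^{p-1}|R_k|]$ is exactly the crude form you warn against. Keep the second-order term $|Z_k|^{p-2}|M_{k+1}|^2$, with $M_{k+1}$ the martingale part of $R_k$, separate from the signed first-order term. Then treat it by Young with weight $\Delta E_k$ (or $h_k$). That way the $(\Delta E_k)^{3/2}$ martingale piece contributes $(\Delta E_k)^{p+1}$ rather than $(\Delta E_k)^{1+p/2}$.
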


\begin{proof}
	Let
	\begin{equation*}
		y_k=y(t_k),\qquad h_k=\Delta+\Delta E_k.
	\end{equation*}
	The constant $C>0$ used below may change from line to line and may depend on $p$, $T$, and the model parameters, but it is independent of $\Delta$.
	
	The integral form of \eqref{eq:transformed2} gives
	\begin{equation}\label{eq:diff-exact-recursion}
		y_{k+1}=y_k+F(y_k)\Delta+G(y_k)\Delta E_k+q\Delta B_k-\mathcal R_{k+1},
	\end{equation}
	where
	\begin{equation*}
		\mathcal R_{k+1}=\int_{t_k}^{t_{k+1}}[F(y_k)-F(y(s))]\mathrm{d}s+\int_{t_k}^{t_{k+1}}[G(y_k)-G(y(s))]\mathrm{d}E_s.
	\end{equation*}
	Applying the time-changed It\^{o} formula to $F(y(s))$ and $G(y(s))$ and using the Fubini theorem to exchange the order of integration, we decompose the remainder as
	\begin{equation*}
		\mathcal R_{k+1}=A_{k+1}+M_{k+1},
	\end{equation*}
	where the $A_{k+1} = A_{k+1}^{tt} + A_{k+1}^{Et} + A_{k+1}^{tE} + A_{k+1}^{EE}$ are given by
	\begin{align*}
		A_{k+1}^{tt} &= -\int_{t_k}^{t_{k+1}}(t_{k+1}-r)F'(y(r))F(y(r))\mathrm{d}r,\\
		A_{k+1}^{Et} &= -\int_{t_k}^{t_{k+1}}(t_{k+1}-r)\left[F'(y(r))G(y(r))+\frac{q^2}{2}F''(y(r))\right]\mathrm{d}E_r,\\
		A_{k+1}^{tE} &= -\int_{t_k}^{t_{k+1}}(E_{t_{k+1}}-E_r)G'(y(r))F(y(r))\mathrm{d}r,\\
		A_{k+1}^{EE} &= -\int_{t_k}^{t_{k+1}}(E_{t_{k+1}}-E_r)\left[G'(y(r))G(y(r))+\frac{q^2}{2}G''(y(r))\right]\mathrm{d}E_r,
	\end{align*}
	and the martingale components $M_{k+1} = M_{k+1}^{Bt} + M_{k+1}^{BE}$ are defined by
	\begin{align*}
		M_{k+1}^{Bt} &= -q\int_{t_k}^{t_{k+1}}(t_{k+1}-r)F'(y(r))\mathrm{d}B(E_r),\\
		M_{k+1}^{BE} &= -q\int_{t_k}^{t_{k+1}}(E_{t_{k+1}}-E_r)G'(y(r))\mathrm{d}B(E_r).
	\end{align*}
	Clearly, $M_{k+1}$ is a martingale increment satisfying $\mathbb E_B[M_{k+1}\mid\mathcal F_{t_k}]=0$.
	
	Define
	\begin{equation*}
		e_k=X_k-y_k,\qquad d_k=e^{X_k}-e^{y_k}.
	\end{equation*}
	Since $0\leq G'(x)\leq\frac{q^2}{4}$, the mean value theorem gives
	\begin{equation*}
		G(X_k)-G(y_k)=\theta_ke_k,\qquad0\leq\theta_k\leq\frac{q^2}{4}.
	\end{equation*}
	Moreover, by the definition of $F$,
	\begin{equation*}
		F(X_k)-F(y_k)=-(\mu+\gamma)d_k.
	\end{equation*}
	By comparing \eqref{eq:numerical-scheme2} with
	\eqref{eq:diff-exact-recursion}, we obtain
	\begin{equation}\label{eq:diff-error-recursion}
		e_{k+1}=Z_k+A_{k+1}+M_{k+1},
	\end{equation}
	where
	\begin{equation*}
		Z_k=(1+\theta_k\Delta E_k)e_k-(\mu+\gamma)d_k\Delta.
	\end{equation*}
	
	The monotonicity of the exponential function implies $e_kd_k\geq0$.
	For nonnegative $u$ and $v$, the inequality $|u-v|^p\leq u^p+v^p$ holds.
	Therefore,
	\begin{equation*}
		|Z_k|^p\leq\left(1+\frac{q^2}{4}\Delta E_k\right)^p|e_k|^p+C\Delta^p|d_k|^p.
	\end{equation*}
	For $x,y\in\mathbb R$, we have
	\begin{equation*}
		\frac{|e^x-e^y|}{e^x+e^y}=\left|\tanh\left(\frac{x-y}{2}\right)\right|\leq1\wedge|x-y|.
	\end{equation*}
	Since $(1\wedge a)^p\leq a$ for $a\geq0$, it follows that
	\begin{equation*}
		|e^x-e^y|^p\leq(e^x+e^y)^p|x-y|.
	\end{equation*}
	Applying this inequality and Young's inequality, we obtain
	\begin{align*}
		\Delta^p|d_k|^p&\leq\Delta^p(e^{X_k}+e^{y_k})^p|e_k|\\
		&\leq\frac{\Delta}{p}|e_k|^p+C\Delta^{p+1}(e^{X_k}+e^{y_k})^{p^2/(p-1)}.
	\end{align*}
	Consequently, Lemma \ref{lem:exp-moment-bounds} and
	$(1+q^2\Delta E_k/4)^p\leq e^{pq^2\Delta E_k/4}$ yield
	\begin{equation}\label{eq:diff-Z-bound}
		\mathbb E_B\left[|Z_k|^p\right]\leq e^{Ch_k}\mathbb E_B\left[|e_k|^p\right]+C\Delta^{p+1}e^{C(T+E_T)}.
	\end{equation}
	
	We next estimate the $L^p$ error from \eqref{eq:diff-error-recursion}.
	For any $u,v\in\mathbb R$ and integer $p\geq2$, Taylor's formula implies
	\begin{equation*}
		|u+v|^p \leq |u|^p + p|u|^{p-2}uv + C\bigl(|u|^{p-2}|v|^2 + |v|^p\bigr).
	\end{equation*}
	Applying this inequality with $u=Z_k$ and $v=A_{k+1}+M_{k+1}$, we obtain the explicit expansion
	\begin{align}\label{eq:diff-taylor-expansion}
		|e_{k+1}|^p &\leq |Z_k|^p + p|Z_k|^{p-2}Z_k M_{k+1} + p|Z_k|^{p-2}Z_k A_{k+1} \notag\\
		&\quad + C|Z_k|^{p-2}|A_{k+1}+M_{k+1}|^2 + C|A_{k+1}+M_{k+1}|^p.
	\end{align}
	We now estimate each term on the right-hand side of \eqref{eq:diff-taylor-expansion}.
	For the martingale term, since $Z_k$ is measurable at time $t_k$ and $M_{k+1}$ is a martingale increment, we obtain
	\begin{equation}\label{eq:diff-martingale-cross}
		\mathbb E_B\left[|Z_k|^{p-2}Z_kM_{k+1}\right]=0.
	\end{equation}
	For the first-order term, by Young's inequality, we have
	\begin{equation}\label{eq:diff-first-order-A}
		p|Z_k|^{p-2}Z_k A_{k+1} \leq p|Z_k|^{p-1}|A_{k+1}| \leq h_k|Z_k|^p + C h_k^{1-p}|A_{k+1}|^p.
	\end{equation}
	For the second-order terms, using $|A_{k+1}+M_{k+1}|^2 \leq 2|A_{k+1}|^2 + 2|M_{k+1}|^2$ and Young's inequality, together with $h_k^{p/2} \leq C e^{Ch_k}$, we obtain
	\begin{align}
		|Z_k|^{p-2}|A_{k+1}|^2 &\leq \frac{h_k}{4}|Z_k|^p + C h_k^{-\frac{p-2}{2}}|A_{k+1}|^p \leq \frac{h_k}{4}|Z_k|^p + C e^{Ch_k}h_k^{1-p}|A_{k+1}|^p,\label{eq:diff-second-order-cross1}\\
		|Z_k|^{p-2}|M_{k+1}|^2 &\leq \frac{h_k}{4}|Z_k|^p + C h_k^{1-p/2}|M_{k+1}|^p. \label{eq:diff-second-order-cross2}
	\end{align}
	For the higher-order remainder, since $h_k^r \leq C_r e^{Ch_k}$ for every $r>0$, we have
	\begin{align}\label{eq:diff-higher-order}
		|A_{k+1}+M_{k+1}|^p &\leq C\bigl(|A_{k+1}|^p+|M_{k+1}|^p\bigr) \leq C e^{Ch_k}\left(h_k^{1-p}|A_{k+1}|^p + h_k^{1-p/2}|M_{k+1}|^p\right).
	\end{align}
	Taking the conditional expectation $\mathbb E_B$ on both sides of \eqref{eq:diff-taylor-expansion}, combining estimates \eqref{eq:diff-martingale-cross}--\eqref{eq:diff-higher-order}, and noting that $1+Ch_k \leq e^{Ch_k}$, we arrive at
	\begin{align}\label{eq:diff-one-step-error}
		\mathbb E_B\left[|e_{k+1}|^p\right] &\leq e^{Ch_k}\left(\mathbb E_B\left[|Z_k|^p\right] + C\mathbb E_B\left[h_k^{1-p}|A_{k+1}|^p + h_k^{1-p/2}|M_{k+1}|^p\right]\right)\nonumber\\
		&\leq e^{Ch_k}\Bigg( \mathbb E_B\left[|e_k|^p\right] + C\Delta^{p+1}e^{C(T+E_T)} + C\mathbb E_B\left[h_k^{1-p}|A_{k+1}|^p + h_k^{1-p/2}|M_{k+1}|^p\right] \Bigg),
	\end{align}
	where we use \eqref{eq:diff-Z-bound} in the last inequality.
	
	It remains to estimate the residual terms on the right-hand side of \eqref{eq:diff-one-step-error}.
	By Lemma \ref{lem:exp-moment-bounds} and conditions \eqref{eq:F-growth-bounds}--\eqref{eq:G-growth-bounds}, the integrands in $A_{k+1}$ and $M_{k+1}$ satisfy the uniform moment bound
	\begin{align}\label{eq:diff-integrand-bound}
		\sup_{0\leq r\leq T}& \mathbb E_B\Bigg[
		|F'(y(r))F(y(r))|^p + \left|F'(y(r))G(y(r))+\frac{q^2}{2}F''(y(r))\right|^p + |G'(y(r))F(y(r))|^p \notag\\
		&+ \left|G'(y(r))G(y(r))+\frac{q^2}{2}G''(y(r))\right|^p + |F'(y(r))|^p + |G'(y(r))|^p \Bigg] \leq Ce^{C(T+E_T)}.
	\end{align}
	
	We next estimate the four components of $A_{k+1}$ as follows.
	For $A_{k+1}^{tt}$, using H\"{o}lder's inequality along with $h_k \geq \Delta$, we have
	\begin{equation*}
		h_k^{1-p}\mathbb E_B\left[|A_{k+1}^{tt}|^p\right] \leq \Delta^p\int_{t_k}^{t_{k+1}}\mathbb E_B\left[|F'(y(r))F(y(r))|^p\right]\mathrm{d}r \leq C\Delta^p e^{C(T+E_T)}\Delta.
	\end{equation*}
	Similarly, for $A_{k+1}^{Et}$ and $A_{k+1}^{tE}$, using $h_k\geq\Delta E_k$, we obtain
	\begin{align*}
		h_k^{1-p}\mathbb E_B\left[|A_{k+1}^{Et}|^p\right] &\leq \Delta^p\int_{t_k}^{t_{k+1}}\mathbb E_B\left[\left|F'(y(r))G(y(r))+\frac{q^2}{2}F''(y(r))\right|^p\right]\mathrm{d}E_r \\
		&\leq C\Delta^p e^{C(T+E_T)}\Delta E_k\\
		h_k^{1-p}\mathbb E_B\left[|A_{k+1}^{tE}|^p\right] &\leq \Delta E_k\Delta^{p-1}\int_{t_k}^{t_{k+1}}\mathbb E_B\left[|G'(y(r))F(y(r))|^p\right]\mathrm{d}r \\
		&\leq C\Delta^p e^{C(T+E_T)}\Delta E_k.
	\end{align*}
	For $A_{k+1}^{EE}$, since the compound coefficient of $G$ is uniformly bounded by \eqref{eq:G-growth-bounds} and $h_k \geq \Delta E_k$,
	\begin{equation*}
		h_k^{1-p}|A_{k+1}^{EE}|^p \leq \frac{C}{h_k^{p-1}}\left[\int_{t_k}^{t_{k+1}}(E_{t_{k+1}}-E_r)\mathrm{d}E_r\right]^p \leq C(\Delta E_k)^{p+1}.
	\end{equation*}
	Using $|A_{k+1}|^p \leq 4^{p-1}\bigl(|A_{k+1}^{tt}|^p + |A_{k+1}^{Et}|^p + |A_{k+1}^{tE}|^p + |A_{k+1}^{EE}|^p\bigr)$ and summing over $k=0,\dots,n_\Delta-1$, we obtain
	\begin{align}\label{eq:diff-A-total-bound}
		\sum_{k=0}^{n_\Delta-1}h_k^{1-p}\mathbb E_B\left[|A_{k+1}|^p\right] &\leq Ce^{C(T+E_T)}\sum_{k=0}^{n_\Delta-1}\left(\Delta^{p+1} + \Delta^p\Delta E_k\right) + C\sum_{k=0}^{n_\Delta-1}(\Delta E_k)^{p+1} \notag\\
		&\leq Ce^{C(T+E_T)}\left(\Delta^p + \sum_{k=0}^{n_\Delta-1}(\Delta E_k)^{p+1}\right).
	\end{align}
	We now estimate the martingale terms.
	For $M_{k+1}^{Bt}$, applying the Burkholder--Davis--Gundy (BDG) inequality and H\"{o}lder's inequality with respect to the measure $\mathrm{d}E_r$ yields
	\begin{align*}
		\mathbb E_B\left[|M_{k+1}^{Bt}|^p\right] &\leq C\Delta^p\mathbb E_B\left[\left(\int_{t_k}^{t_{k+1}}|F'(y(r))|^2\mathrm{d}E_r\right)^{p/2}\right] \\
		&\leq C\Delta^p(\Delta E_k)^{\frac{p-2}{2}}\int_{t_k}^{t_{k+1}}\mathbb E_B\left[|F'(y(r))|^p\right]\mathrm{d}E_r.
	\end{align*}
	Since $h_k \geq \Delta E_k$, dividing by $h_k^{\frac{p-2}{2}}$ gives
	\begin{equation*}
		h_k^{1-p/2}\mathbb E_B\left[|M_{k+1}^{Bt}|^p\right] \leq C\Delta^p\int_{t_k}^{t_{k+1}}\mathbb E_B\left[|F'(y(r))|^p\right]\mathrm{d}E_r \leq C\Delta^p e^{C(T+E_T)}\Delta E_k.
	\end{equation*}
	For $M_{k+1}^{BE}$, using the uniform bound $\|G'\|_\infty \leq \frac{q^2}{4}$ and $h_k \geq \Delta E_k$, the BDG inequality gives
	\begin{align*}
		h_k^{1-p/2}\mathbb E_B\left[|M_{k+1}^{BE}|^p\right] &\leq \frac{C}{h_k^{\frac{p-2}{2}}}\left[\int_{t_k}^{t_{k+1}}(E_{t_{k+1}}-E_r)^2\mathrm{d}E_r\right]^{p/2} \\
		&\leq C\frac{(\Delta E_k)^{\frac{3p}{2}}}{(\Delta E_k)^{\frac{p-2}{2}}} = C(\Delta E_k)^{p+1}.
	\end{align*}
	Using $|M_{k+1}|^p \leq 2^{p-1}\bigl(|M_{k+1}^{Bt}|^p + |M_{k+1}^{BE}|^p\bigr)$ and summing over $k=0,\dots,n_\Delta-1$, we deduce that
	\begin{align}\label{eq:diff-M-total-bound}
		\sum_{k=0}^{n_\Delta-1}h_k^{1-p/2}\mathbb E_B\left[|M_{k+1}|^p\right] &\leq Ce^{C(T+E_T)}\Delta^p\sum_{k=0}^{n_\Delta-1}\Delta E_k + C\sum_{k=0}^{n_\Delta-1}(\Delta E_k)^{p+1}\notag\\
		&\leq Ce^{C(T+E_T)}\left(\Delta^p + \sum_{k=0}^{n_\Delta-1}(\Delta E_k)^{p+1}\right).
	\end{align}
	Combining \eqref{eq:diff-A-total-bound} and \eqref{eq:diff-M-total-bound}, we conclude that
	\begin{align}\label{eq:diff-residual-bound}
		\sum_{k=0}^{n_\Delta-1}h_k^{1-p}&\mathbb E_B\left[|A_{k+1}|^p\right] + \sum_{k=0}^{n_\Delta-1}h_k^{1-p/2}\mathbb E_B\left[|M_{k+1}|^p\right] \notag\\
		&\leq Ce^{C(T+E_T)}\left(\Delta^p + \sum_{k=0}^{n_\Delta-1}(\Delta E_k)^{p+1}\right).
	\end{align}
	
	Iterating \eqref{eq:diff-one-step-error} from $k=0$ to $j-1$ yields
	\begin{align*}
		\mathbb E_B\left[|e_j|^p\right] &\leq Ce^{C(T+E_T)}\Bigg\{ \sum_{k=0}^{j-1}\Delta^{p+1} + \sum_{k=0}^{j-1}\mathbb E_B\left[h_k^{1-p}|A_{k+1}|^p + h_k^{1-p/2}|M_{k+1}|^p\right] \Bigg\} \notag\\
		&\leq Ce^{C(T+E_T)}\left(\Delta^p + \sum_{k=0}^{n_\Delta-1}(\Delta E_k)^{p+1}\right),
	\end{align*}
	where we have used $\sum_{k=0}^{j-1}h_k \leq (T+E_T)$ and \eqref{eq:diff-residual-bound} in the last inequality.
	Taking $\mathbb E_D$ on both sides and taking the maximum over $0\leq j\leq n_\Delta$, applying Lemma \ref{lemma:29} and Lemma \ref{lem:sharp-clock}, we obtain
	\begin{align*}
		\max_{0\leq j\leq n_\Delta}\mathbb E\left[|e_j|^p\right]&\leq C\Delta^p\mathbb E\left[e^{CE_T}\right]+C\mathbb E\left[e^{CE_T}\sum_{k=0}^{n_\Delta-1}(\Delta E_k)^{p+1}\right]\\
		&\leq C\left(\Delta^p+\Delta^{p\alpha}\right)\\
		&\leq C\Delta^{p\alpha}.
	\end{align*}
	This establishes the first convergence estimate.
	Next, recall that the inverse transformation $\varphi(x) = \frac{Ne^x}{1+e^x}$ satisfies $\|\varphi'\|_\infty \leq N/4$.
	Since $\varphi$ is globally Lipschitz continuous, taking the maximum and the expectation directly yields
	\begin{equation*}
		\max_{0\leq k\leq n_\Delta}\mathbb E\left[|I_k - I(t_k)|^p\right] \leq \left(\frac{N}{4}\right)^p \max_{0\leq k\leq n_\Delta}\mathbb E\left[|X_k - y(t_k)|^p\right] \leq C\Delta^{p\alpha}.
	\end{equation*}
	This completes the proof.
\end{proof}
\section{Numerical simulations}
\label{sec:numerical_simulation}

In this section, we provide numerical experiments to verify the strong convergence order established in Theorem \ref{thm:strong-conv-diffusion}.

We first simulate the inverse stable subordinator $E_t$ using the duality construction \cite{DengLiu2020,Jum2016}.
For an operational-time step size $\delta>0$ and physical-time horizon $T>0$, we generate $D_0=0$ and $D_{i\delta}=D_{(i-1)\delta}+Z_i$ for $i\geq1$, where $Z_i\stackrel{d}{=}D_\delta$ are i.i.d. stable random variables.
The procedure stops at $N_T$ once $T\in[D_{N_T\delta},D_{(N_T+1)\delta})$.
The discrete inverse subordinator is defined by $E_t^\delta = n_t\delta$ with $n_t = \max\{n\geq0 : D_{n\delta}\leq t\}$.

The model parameters are set to $N=100$, $\beta=0.5$, $\mu=20$, $\gamma=25$, $\sigma=0.035$, and $I_0=90$.
The time horizon is $T=2$. Figure \ref{fig:Trajectories} presents several sample paths generated by the LEM method with step size $\Delta=10^{-3}$ for $\alpha=0.6$ and $\alpha=0.8$, respectively. It can be observed that all numerical trajectories remain in the biologically meaningful interval $(0,N)$, illustrating the positivity-preserving property of the proposed method. 

To further verify the strong convergence result, expectations are approximated via $M=100$ independent sample paths over the multi-scale step sizes
\begin{equation*}
	\Delta \in \{2^{-16}, 2^{-17}, 2^{-18}, 2^{-19}, 2^{-20}\}.
\end{equation*}
The numerical solution generated with the step size $\Delta=2^{-20}$ is used as the reference solution.
We apply the explicit scheme \eqref{eq:numerical-scheme2} directly on the uniform physical-time grid.
Figure \ref{fig:model2_convergence} presents the corresponding root mean square errors for $\alpha=0.6$ and $\alpha=0.8$.
The error curves match the reference lines of slopes $0.6$ and $0.8$, respectively, in full agreement with the strong convergence order $\alpha$ proved in Theorem \ref{thm:strong-conv-diffusion}.

\begin{figure}[htbp]
	\centering
	\begin{subfigure}{0.49\textwidth}
		\centering
		\includegraphics[width=\textwidth]{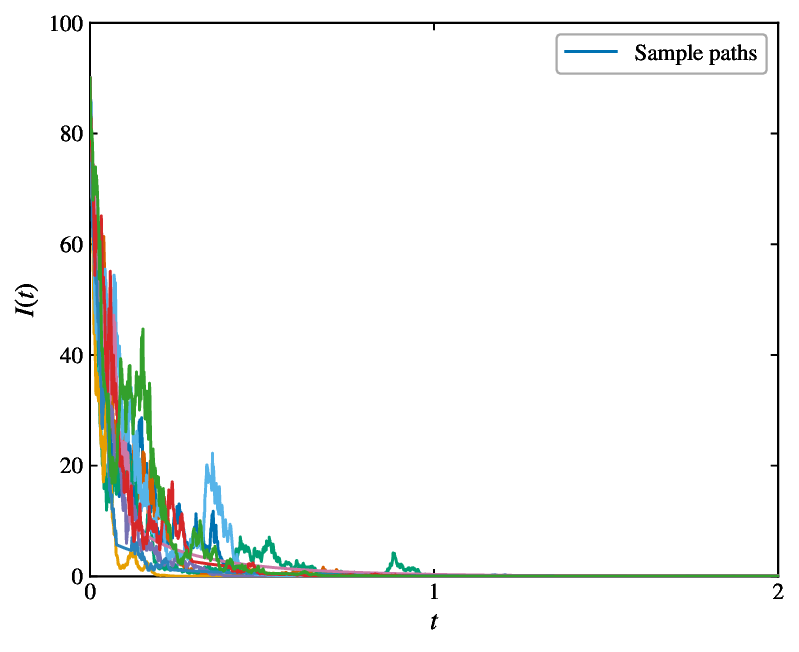}
		\caption{$\alpha=0.6$ }
		\label{fig:Xt_alpha06}
	\end{subfigure}
	\hfill
	\begin{subfigure}{0.49\textwidth}
		\centering
		\includegraphics[width=\textwidth]{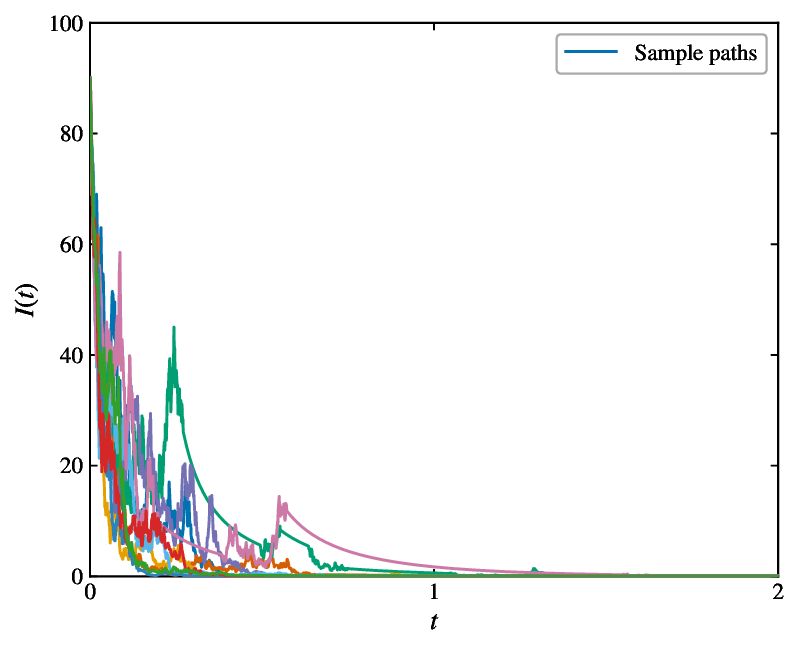}
		\caption{$\alpha=0.8$}
		\label{fig:Xt_alpha08}
	\end{subfigure}
	\caption{Sample trajectories generated by the LEM method with step size $\Delta=10^{-3}$.}
	\label{fig:Trajectories}
\end{figure}

\begin{figure}[htbp]
	\centering
	\begin{subfigure}{0.49\textwidth}
		\centering
		\includegraphics[width=\textwidth]{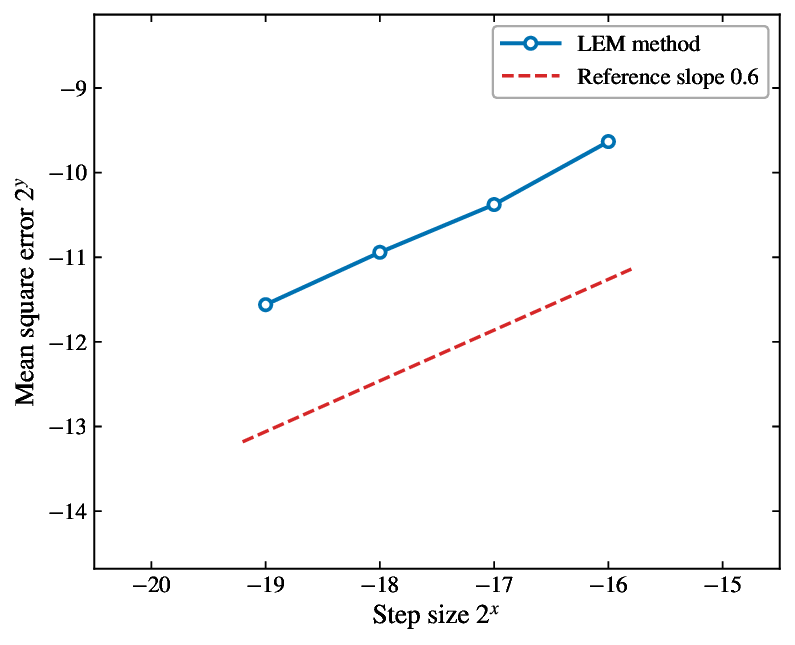}
		\caption{$\alpha=0.6$ (reference slope $0.6$)}
		\label{fig:model2_alpha06}
	\end{subfigure}
	\hfill
	\begin{subfigure}{0.49\textwidth}
		\centering
		\includegraphics[width=\textwidth]{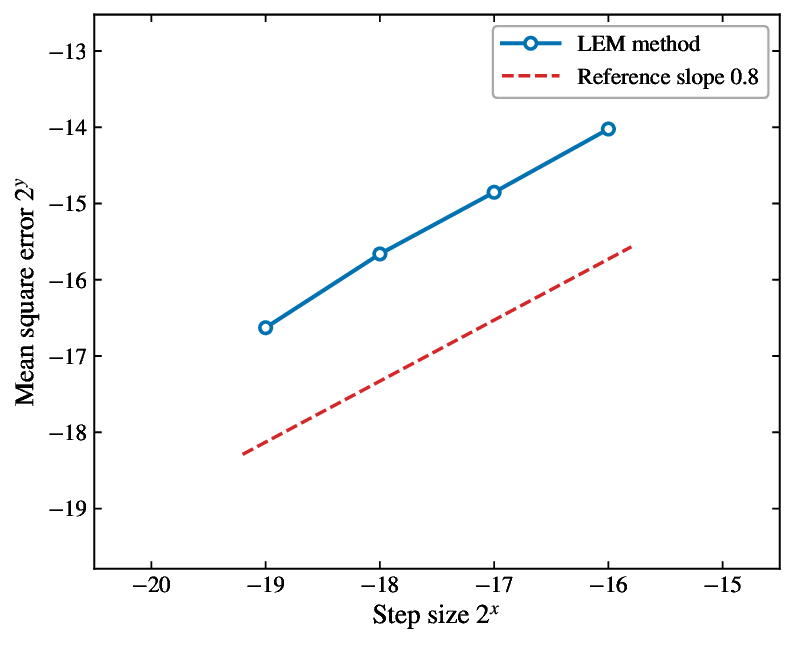}
		\caption{$\alpha=0.8$ (reference slope $0.8$)}
		\label{fig:model2_alpha08}
	\end{subfigure}
	\caption{Strong convergence order for the diffusion time-changed SIS model.}
	\label{fig:model2_convergence}
\end{figure}

\section{Conclusion}\label{sec:conclusion}

The numerical experiments lead to the following observations.

First, the proposed LEM scheme preserves positivity in all numerical simulations. In all simulations, the numerical approximations remain in the interval $(0,N)$. This is attributed to the Lamperti transform, which maps the original state space $(0, N)$ to the entire real axis, ensuring solution positivity by algorithmic design.

Second, the convergence analysis results are highly consistent with theoretical predictions. 
For the diffusion time-changed  SIS model \eqref{eq:model33}, the convergence order of the LEM scheme is precisely characterized by the stability index $\alpha$ of the inverse subordinator: when $\alpha=0.6$, the convergence order is approximately $0.6$; when $\alpha=0.8$, the convergence order is approximately $0.8$. This intuitively reflects the impact of the path irregularity of the time-change process $E_t$ on the numerical accuracy mechanism.

Finally, the numerical simulations not only validate the correctness of the theoretical analysis presented in this paper but also reveal the significant influence of the time-change parameter $\alpha$ on numerical computations in time-changed stochastic SIS models. These results support the effectiveness of the proposed LEM method for the numerical approximation of diffusion time changed stochastic SIS models.

\bibliographystyle{abbrv}
\bibliography{myref}

\end{document}